\newtheorem*{maintheorem*}{Main Theorem}
\newtheorem{theorem}{Theorem}[section]
\newtheorem{proposition}[theorem]{Proposition}
\newtheorem{lemma}[theorem]{Lemma}
\newtheorem{corollary}[theorem]{Corollary}
\theoremstyle{definition}
\newtheorem{definition}[theorem]{Definition}
\newtheorem{remark}[theorem]{Remark}
\newtheorem{example}[theorem]{Example}
\newtheorem{exercise}[theorem]{Exercise}
\numberwithin{equation}{section}
\newcommand{\cI}{\mathcal{I}}
\def\z{\mathbb{Z}}
\def\fz{\z[\sqrt{-5}]}
\def\fq{\q(\sqrt{-5})}
 \newcommand{\C}{{\mathbb{C}}}
  \newfont{\eu}{eurb10 at 11pt}
 \newfont{\eus}{eusb10 at 11pt}
\newcommand \q{{\mathbb Q}}
\newcommand \n{{\mathbb N}}
\keywords{ring of integers, class group, half-factorial domains, half-factorial monoids, ideal theory, factorizations}
\begin{document}
	
	\mbox{}
	\title{How do elements really factor in $\mathbb{Z}[\sqrt{-5}]$?}
	
	\author{Scott T. Chapman}
	\address{Department of Mathematics and Statistics\\Sam Houston State University\\Huntsville, TX  77341}
	\email{scott.chapman@shsu.edu}
	
	\author{Felix Gotti}
	\address{Department of Mathematics\\UC Berkeley\\Berkeley, CA 94720 \newline \indent Department of Mathematics\\Harvard University\\Cambridge, MA 02138}
	\email{felixgotti@berkeley.edu}
	\email{felixgotti@harvard.edu}
	
	\author{Marly Gotti}
	\address{Department of Mathematics\\University of Florida\\Gainesville, FL 32611}
	\email{marlycormar@ufl.edu}
	
	\subjclass[2010]{Primary 11R11, 13F15, 20M13}
	
	\date{\today}

\begin{abstract}
	Most undergraduate level abstract algebra texts use $\fz$ as an example of an integral domain which is not a unique factorization domain (or UFD) by exhibiting two distinct irreducible factorizations of a nonzero element. But such a brief example, which requires merely an understanding of basic norms, only scratches the surface of how elements actually factor in this ring of algebraic integers. We offer here an interactive framework which shows that while $\fz$ is not a UFD, it does satisfy a slightly weaker factorization condition, known as half-factoriality. The arguments involved revolve around the Fundamental Theorem of Ideal Theory in algebraic number fields.
	\bigskip
	\begin{center}
		\textit{Dedicated to David F. Anderson on the occasion of his retirement.}
	\end{center}
\end{abstract}

\maketitle

\tableofcontents

\section{Introduction}
\label{sec:intro}

Consider the integral domain
\[
	\fz = \{ a+b\sqrt{-5} \mid a,b \in \z\}.
\]
Your undergraduate abstract algebra text probably used it as the base example of an integral domain that is not a unique factorization domain (or UFD). The Fundamental Theorem of Arithmetic fails in $\fz$ as this domain contains elements with multiple factorizations into irreducibles; for example,
\begin{equation}\label{2ndzroot5}
	6=2\cdot 3 = (1 - \sqrt{-5})(1 + \sqrt{-5})
\end{equation}
even though $2,3, 1 - \sqrt{-5}$, and $1+ \sqrt{-5}$ are pairwise non-associate irreducible elements in $\fz$. To argue this, the norm on $\fz$, i.e.,
\begin{align}
	N(a+b\sqrt{-5})=a^2+5b^2, \label{eq:standard norm}
\end{align}
plays an important role, as it is a multiplicative function satisfying the following properties:
\begin{itemize}
	\item $N(\alpha)=0$ if and only if $\alpha=0$;
	\vspace{2pt}
	\item $N(\alpha\beta)=N(\alpha)N(\beta)$ for all $\alpha ,\beta\in \fz$;
	\vspace{2pt}
	\item $\alpha$ is a unit if and only if $N(\alpha) = 1$ (i.e., $\pm 1$ are the only units of $\fz$);
	\vspace{2pt}
	\item if $N(\alpha)$ is prime, then $\alpha$ is irreducible.
\end{itemize}
However, introductory abstract algebra books seldom dig deeper than what Equation \eqref{2ndzroot5} does. The goal of this paper is to use ideal theory to describe exactly how elements in $\fz$ factor into products of irreducibles.  In doing so, we will show that $\fz$ satisfies a nice factorization property, which is known as \textit{half-factoriality}. Thus, we say that $\fz$ is a \emph{half-factorial domain} (or HFD).  Our journey will require nothing more than elementary algebra, but will give the reader a glimpse of how The Fundamental Theorem of Ideal Theory resolves the non-unique factorizations of $\fz$. The notion that unique factorization in rings of integers could be recovered via ideals was important in the late 1800's in attempts to prove Fermat's Last Theorem (see \cite[Chapter~11]{PD}).

Our presentation is somewhat interactive, as many steps that follow from standard techniques of basic algebra are left to the reader as exercises. The only background we expect from the reader are introductory courses in linear algebra and abstract algebra. Assuming such prerequisites, we have tried to present here a self-contained and friendly approach to the phenomenon of non-uniqueness of factorizations occurring in $\fz$.  More advanced and general arguments (which apply to any ring of integers) can be found in~\cite{Ma} and~\cite{PD}.
\medskip

\section{Integral Bases and Discriminants} \label{sec:integral bases}

Although in this paper we are primarily concerned with the phenomenon of non-unique factorizations in the particular ring of integers $\fz$, it is more enlightening from an algebraic perspective to introduce our needed concepts for arbitrary commutative rings with identity, rings of integers, or quadratic rings of integers, depending on the most appropriate context for each concept being introduced. In what follows, we shall proceed in this manner while trying, by all means, to keep the exposition as elementary as possible.

An element $\alpha \in \C$ is said to be \emph{algebraic} provided that it is a root of a nonzero polynomial with rational coefficients, while $\alpha$ is said to be an \emph{algebraic integer} provided that it is a root of a monic polynomial with integer coefficients. It is not hard to argue that every subfield of $\C$ contains $\q$ and is a $\q$-vector space.

\begin{definition}
	A subfield $K$ of $\C$ is called an \emph{algebraic number field} provided that it has finite dimension as a vector space over $\q$. The subset
	\[
		\mathcal{O}_K := \{\alpha \in K \mid \alpha \ \text{ is an algebraic integer}\}
	\]
	of $K$ is called the \emph{ring of integers} of $K$.
\end{definition}

The ring of integers of any algebraic number field is, indeed, a ring. The reader is invited to verify this observation. If $\alpha$ is a complex number, then $\q(\alpha)$ denotes the smallest subfield of $\C$ containing $\alpha$. It is well known that a subfield $K$ of $\C$ is an algebraic number field if and only if there exists an algebraic number $\alpha \in \C$ such that $K = \q(\alpha)$ (see, for example, \cite[Theorem~2.17]{Ja}). Among all algebraic number fields, we are primarily interested in those that are two-dimensional vector spaces over~$\q$.

\begin{definition}
	An algebraic number field that is a two-dimensional vector space over $\q$ is called a \emph{quadratic number field}. If $K$ is a quadratic number field, then $\mathcal{O}_K$ is called a \emph{quadratic ring of integers}.
\end{definition}

For $\alpha \in \C$, let $\z[\alpha]$ denote the set of all polynomial expressions in $\alpha$ having integer coefficients. Clearly, $\z[\alpha]$ is a subring of $\q(\alpha)$. It is also clear that, for $d \in \z$, the field $\q(\sqrt{d})$ has dimension at most two as a
$\q$-vector space and, therefore, it is an algebraic number field. Moreover, if $d \notin \{0,1\}$ and $d$ is squarefree (i.e., $d$ is not divisible by the square of any prime), then it immediately follows that $\q(\sqrt{d})$ is a two-dimensional vector space over $\q$ and, as a result, a quadratic number field. As we are mainly interested in the case when $d = -5$, we propose the following exercise.

\begin{exercise} \label{ex:rings of integers of quadratic fields}
	Let $d \in \z \setminus \{0,1\}$ be a squarefree integer such that $d \equiv 2, 3 \pmod{4}$. Prove that $\z[\sqrt{d}]$ is the ring of integers of the quadratic number field $\q(\sqrt{d})$.
\end{exercise}

\begin{remark}
	When $d \in \z \setminus \{0,1\}$ is a squarefree integer satisfying $d \equiv 1 \pmod{4}$, it is not hard to argue that the ring of integers of $\q(\sqrt{d})$ is $\z[\frac{1 + \sqrt{d}}{2}]$. However, we will not be concerned with this case as our case of interest is $d = -5$.
\end{remark}

For $d$ as specified in Exercise~\ref{ex:rings of integers of quadratic fields}, the elements of $\z[\sqrt{d}]$ can be written in the form $a + b \sqrt{d}$ for $a,b \in \z$. The \emph{norm} $N$ on $\z[\sqrt{d}]$ is defined by
\[
	N(a + b\sqrt{d}) = a^2 - d b^2
\]
(cf. Equation~(\ref{eq:standard norm})). The norm $N$ on $\z[\sqrt{d}]$ also satisfies the four properties listed in the introduction.

Let us now take a look at the structure of an algebraic number field $K$ with linear algebra in mind. 
For $\alpha \in K$ consider the function $m_\alpha \colon K \to K$ defined via multiplication by $\alpha$, i.e., $m_\alpha(x) = \alpha x$ for all $x \in K$. One can easily see that $m_\alpha$ is a linear transformation of $\q$-vector spaces. Therefore, after fixing a basis for the $\q$-vector space $K$, we can represent $m_\alpha$ by a matrix $M$. The \emph{trace} of $\alpha$, which is denoted by $\text{Tr}(\alpha)$, is defined to be the trace of the matrix $M$. It is worth noting that $\text{Tr}(\alpha)$ does not depend on the chosen basis for $K$. Also, notice that $\text{Tr}(\alpha) \in \q$. Furthermore, if $\alpha \in \mathcal{O}_K$, then $\text{Tr}(\alpha) \in \z$ (see \cite[Lemma~4.1.1]{RE}, or Exercise~\ref{ex:trace and discriminant} for the case when $K = \q(\sqrt{d})$).

\begin{definition}
	Let $K$ be an algebraic number field that has dimension $n$ as a $\q$-vector space. The \emph{discriminant} of a subset $\{\omega_1, \dots, \omega_n\}$ of $K$, which is denoted by $\Delta[\omega_1, \dots, \omega_n]$, is $\det \, T$, where $T$ is the $n \times n$ matrix $\big(\text{Tr}(\omega_i \omega_j)\big)_{1 \le i,j \le n}$.
\end{definition}

With $K$ as introduced above, if $\{\omega_1, \dots, \omega_n\}$ is a subset of $\mathcal{O}_K$, then it follows that $\Delta[\omega_1, \dots, \omega_n] \in \z$ (see Exercise~\ref{ex:trace and discriminant} for the case when $K = \q(\sqrt{d})$). In addition, the discriminant of any basis for the $\q$-vector space $K$ is nonzero; we will prove this for $K = \q(\sqrt{d})$ in Proposition~\ref{prop:discrimvalue}.

\begin{exercise} \label{ex:trace and discriminant}
	Let $d \in \z \setminus \{0,1\}$ be a squarefree integer such that $d \equiv 2,3 \pmod{4}$.
	\begin{enumerate}
		\item If $\alpha= a_1 + a_2\sqrt{d} \in \z[\sqrt{d}]$, then $\text{Tr}(\alpha) = 2a_1$.
		\vspace{3pt}
		\item If, in addition, $\beta = b_1 + b_2 \sqrt{d} \in \z[\sqrt{d}]$, then
		\[
			\Delta[\alpha,\beta] = \left( \det \left[ \begin{matrix} \alpha \, & \sigma(\alpha) \\
			\beta \, & \sigma(\beta) \end{matrix}\right] \right)^2 = 4d(a_1b_2 - a_2b_1)^2,
		\]
		where $\sigma(x + y\sqrt{d}) = x - y \sqrt{d}$ for all $x,y \in \z$.
	\end{enumerate}
\end{exercise}
\medskip

\begin{example} \label{ex:discriminant of the canonical integral basis}
	Let $d \notin \{0,1\}$ be a squarefree integer such that $d \equiv 2,3 \pmod{4}$. It follows from Exercise~\ref{ex:trace and discriminant} that the subset $\{1, \sqrt{d}\}$ of the ring of integers $\z[\sqrt{d}]$ satisfies that $\Delta[1, \sqrt{d}] =4d$.
%
\end{example}
\medskip


We proceed to introduce the concept of integral basis.

\begin{definition}
	Let $K$ be an algebraic number field of dimension $n$ as a vector space over $\q$. The elements $\omega_1, \dots , \omega_n \in \mathcal{O}_K$ form an \emph{integral basis} for $\mathcal{O}_K$ if for each $\beta \in \mathcal{O}_K$ there are unique $z_1,\dots ,z_n \in \z$ satisfying $\beta = z_1 \omega_1 + \dots + z_n \omega_n$.
\end{definition}

\begin{example} \label{ex:canonical integral basis}
	Let $d \in \z \setminus \{0,1\}$ be a squarefree integer such that $d \equiv 2,3 \pmod{4}$. Clearly, every element in $\z[\sqrt{d}]$ is an integral linear combination of $1$ and $\sqrt{d}$. Suppose, on the other hand, that $a_1 + a_2 \sqrt{d} = b_1 + b_2 \sqrt{d}$ for some $a_1, a_2, b_1, b_2 \in \z$. Note that $a_2 = b_2$; otherwise $\sqrt{d} = \frac{a_1 - b_1}{b_2 - a_2}$ would be a rational number. As a result, $a_1 = b_1$. Thus, we have verified that every element of $\z[\sqrt{d}]$ can be uniquely written as an integral linear combination of $1$ and $\sqrt{d}$. Hence, the set $\{1, \sqrt{d}\}$ is an integral basis for $\z[\sqrt{d}]$.
\end{example}

In general, the ring of integers of any algebraic number field has an integral basis (see~\cite[Theorem~3.27]{Ja}). On the other hand, although integral bases are not unique, any two integral bases for the same ring of integers have the same discriminant. We shall prove this for $\z[\sqrt{d}]$ in Theorem~\ref{thm:discrimvalue}. \\

\noindent {\bf Notation:} If $S$ is a subset of the complex numbers, then we let $S^\bullet$ denote $S \! \setminus \! \{0\}$.
\medskip

\begin{lemma}\label{intbasis}
	Let $K$ be an algebraic number field of dimension $n$ as a $\q$-vector space. An integral basis for $\mathcal{O}_K$ is a basis for $K$ as a vector space over $\q$. 
\end{lemma}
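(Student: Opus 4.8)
The plan is to prove that an integral basis $\{\omega_1, \dots, \omega_n\}$ for $\mathcal{O}_K$ is also a $\q$-basis for $K$. Since $K$ has dimension $n$ over $\q$ and the set has exactly $n$ elements, it suffices to establish that $\{\omega_1, \dots, \omega_n\}$ is \emph{linearly independent} over $\q$; linear independence together with the correct cardinality forces it to be a spanning set and hence a basis.

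To prove linear independence over $\q$, suppose that $q_1 \omega_1 + \dots + q_n \omega_n = 0$ for some rationals $q_1, \dots, q_n$, not all zero. The first step is to clear denominators: multiplying through by a common denominator, I may assume each coefficient is an integer, and after dividing out by any common factor I obtain a relation $z_1 \omega_1 + \dots + z_n \omega_n = 0$ with $z_1, \dots, z_n \in \z$ not all zero. The key observation is that $0 \in \mathcal{O}_K$ admits the obvious representation $0 = 0 \cdot \omega_1 + \dots + 0 \cdot \omega_n$ as an integral linear combination of the $\omega_i$, and the defining property of an integral basis guarantees that such a representation is \emph{unique}. Comparing the two integral representations $z_1 \omega_1 + \dots + z_n \omega_n = 0$ and $0\cdot\omega_1 + \dots + 0\cdot\omega_n = 0$, uniqueness forces $z_i = 0$ for every $i$, contradicting the assumption that the $z_i$ were not all zero. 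Hence no nontrivial rational relation exists, and the $\omega_i$ are $\q$-linearly independent.

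Having established linear independence of $n$ vectors in an $n$-dimensional $\q$-vector space, I conclude that $\{\omega_1, \dots, \omega_n\}$ is a $\q$-basis for $K$, completing the proof.

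The only genuinely delicate point—and the step I expect to require the most care—is the passage from a rational relation to an integral one, making sure this reduction is legitimate and that the resulting integer coefficients are genuinely not all zero so that the contradiction with uniqueness is valid. Everything else is a direct application of the uniqueness clause in the definition of integral basis together with the standard fact that $n$ linearly independent vectors span an $n$-dimensional space.
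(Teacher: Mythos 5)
Your proof is correct and follows essentially the same route as the paper's: clear denominators in a rational relation to obtain an integral one, invoke the uniqueness clause in the definition of integral basis (applied to the two integral representations of $0$) to kill the coefficients, and conclude by counting dimensions. The extra step of dividing out a common factor is harmless but unnecessary.
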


\begin{proof}
	Suppose that $\{\omega_1, \dots, \omega_n\}$ is an integral basis for $\mathcal{O}_K$, and take rational coefficients $q_1, \dots, q_n$ such that
	\[
		q_1 \omega_1 + \dots + q_n \omega_n = 0.
	\]
	Multiplying the above equality by the common denominator of the nonzero $q_i$'s and using the fact that $\{\omega_1, \dots, \omega_n\}$ is an integral basis for the ring of integers $\mathcal{O}_K$, we obtain that $q_1 = \dots = q_n = 0$. Hence, $\{\omega_1, \dots, \omega_n\}$ is a linearly independent set of the $\q$-vector space $K$. As $K$ has dimension $n$ over $\q$, the set $\{\omega_1, \dots, \omega_n\}$ is a basis for the vector space $K$ over $\q$.
%
\end{proof}
\medskip


\begin{proposition} \label{prop:discrimvalue}
	Let $d \in \z \setminus \{0,1\}$ be a squarefree integer with $d \equiv 2,3 \pmod{4}$. If $\{\alpha_1,\alpha_2\}$ is a vector space basis for $\q(\sqrt{d})$ contained in $\z[\sqrt{d}]$, then $\Delta[\alpha_1,\alpha_2] \in \z^\bullet$.
\end{proposition}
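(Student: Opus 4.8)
The plan is to reduce the whole statement to the explicit discriminant formula recorded in Exercise~\ref{ex:trace and discriminant}. Since $\{\alpha_1,\alpha_2\}$ is contained in $\z[\sqrt{d}]$, I would begin by writing $\alpha_1 = a_1 + a_2\sqrt{d}$ and $\alpha_2 = b_1 + b_2\sqrt{d}$ with $a_1,a_2,b_1,b_2 \in \z$. Part~(2) of that exercise then yields
\[
	\Delta[\alpha_1,\alpha_2] = 4d(a_1 b_2 - a_2 b_1)^2,
\]
and because $d$ together with the four coordinates are integers, the right-hand side is patently an element of $\z$. Thus the containment in $\z$ is immediate, and the only real content left to establish is that $\Delta[\alpha_1,\alpha_2]$ is nonzero.

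To see the nonvanishing, I would show that the integer $a_1 b_2 - a_2 b_1$ is nonzero, which by the displayed formula (and $d \neq 0$) suffices. The key observation is that, by Example~\ref{ex:canonical integral basis}, the set $\{1,\sqrt{d}\}$ is a basis for $\q(\sqrt{d})$ over $\q$; hence $(a_1,a_2)$ and $(b_1,b_2)$ are exactly the coordinate vectors of $\alpha_1$ and $\alpha_2$ relative to this basis. The quantity $a_1 b_2 - a_2 b_1$ is then the determinant of the matrix whose rows are these coordinate vectors, and such a determinant vanishes precisely when $\alpha_1$ and $\alpha_2$ are linearly dependent over $\q$. Since the hypothesis guarantees that $\{\alpha_1,\alpha_2\}$ is a basis for $\q(\sqrt{d})$, the elements $\alpha_1$ and $\alpha_2$ are linearly independent over $\q$, forcing $a_1 b_2 - a_2 b_1 \neq 0$.

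Combining the two steps finishes the argument: as $d \in \z \setminus \{0,1\}$ we have $d \neq 0$, and as $a_1 b_2 - a_2 b_1 \neq 0$ we have $(a_1 b_2 - a_2 b_1)^2 \neq 0$, so $\Delta[\alpha_1,\alpha_2] = 4d(a_1 b_2 - a_2 b_1)^2$ is a nonzero integer, i.e., $\Delta[\alpha_1,\alpha_2] \in \z^\bullet$.

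Because the exercise already delivers the full computation of the discriminant, there is no genuine computational hurdle here. The one step deserving care --- and really the heart of the proof --- is the bridge from the algebraic hypothesis that $\{\alpha_1,\alpha_2\}$ is a $\q$-basis to the arithmetic fact that the $2\times 2$ integer determinant $a_1 b_2 - a_2 b_1$ does not vanish; phrasing this through the coordinate representation in the basis $\{1,\sqrt{d}\}$ is what makes the link clean.
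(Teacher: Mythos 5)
Your proposal is correct and takes essentially the same route as the paper: both arguments come down to the identity $\Delta[\alpha_1,\alpha_2]=4d(a_1b_2-a_2b_1)^2$ together with the fact that a $2\times 2$ coordinate determinant vanishes exactly when the two elements are linearly dependent over $\q$. The only difference is presentational --- the paper argues by contradiction, passing through a general integral basis $\{\omega_1,\omega_2\}$ and extracting a rational dependence from a kernel vector, while you apply the exercise's formula directly and invoke linear independence to conclude the determinant is nonzero.
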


\begin{proof}
	From the fact that $\{\alpha_1, \alpha_2\} \subseteq \z[\sqrt{d}]$, it follows that $\Delta[\alpha_1, \alpha_2] \in \z$. So suppose, by way of contradiction, that $\Delta[\alpha_1, \alpha_2] = 0$. Taking $\{\omega_1, \omega_2\}$ to be an integral basis for $\z[\sqrt{d}]$, one has that
	\[
		\begin{array} {lclcl}
			\alpha_1 & = &  z_{1,1}\omega_1 & + & z_{1,2}\omega_2 \\
			\alpha_2 & = & z_{2,1}\omega_1 & + & z_{2,2}\omega_2,
		\end{array}
	\]
	for some $z_{i,j} \in \z$. Using Exercise~\ref{ex:trace and discriminant}, we obtain
	\begin{multline}\label{discrim}
		\Delta[\alpha_1,\alpha_2] = \left(\det \left[ \begin{matrix} \alpha_1 & \sigma(\alpha_1) \\  \alpha_2 & \sigma(\alpha_2) \end{matrix}\right]\right)^2 =
		\left(\det \left( \left[\begin{matrix} z_{1,1} & z_{1,2} \\  z_{2,1} & z_{2,2} \end{matrix}\right]
		\left[ \begin{matrix} \omega_1 & \sigma(\omega_1) \\  \omega_2 & \sigma(\omega_2) \end{matrix}\right]\right)\right)^2  \\
		= \left(\det \left[\begin{matrix} z_{1,1} & z_{1,2} \\  z_{2,1} & z_{2,2} \end{matrix}\right]\right)^2
		\left(\det \left[ \begin{matrix} \omega_1 & \sigma(\omega_1) \\  \omega_2 & \sigma(\omega_2) \end{matrix}\right]\right)^2 =
		\left(\det \left[\begin{matrix} z_{1,1} & z_{1,2} \\  z_{2,1} & z_{2,2} \end{matrix}\right]\right)^2\Delta[\omega_1,\omega_2],
	\end{multline}
	where $\sigma(x + y\sqrt{d}) = x - y \sqrt{d}$ for all $x,y \in \z$. If $\omega_1 = 1$ and $\omega_2 = \sqrt{d}$, then
	\[
		\det \left[\begin{matrix} z_{1,1} & z_{2,1} \\  z_{1,2} & z_{2,2} \end{matrix}\right]=
		\det \left[\begin{matrix} z_{1,1} & z_{1,2} \\  z_{2,1} & z_{2,2} \end{matrix}\right]=0,
	\] 
	and so there are elements $q_1, q_2\in \q$ not both zero with 
	\[
		\left[\begin{matrix} z_{1,1} & z_{2,1} \\  z_{1,2} & z_{2,2} \end{matrix}\right]
		\left[\begin{matrix} q_1 \\ q_2 \end{matrix}\right] = \left[\begin{matrix} 0 \\ 0 \end{matrix}\right].
	\]
	Hence,
	\begin{align*}
		0 &= \omega_1(q_1 z_{1,1}+q_2 z_{2,1})+\omega_2(q_1 z_{1,2}+q_2 z_{2,2}) \\
		&= q_1(z_{1,1}\omega_1 + z_{1,2}\omega_2) + q_2(z_{2,1}\omega_1 + z_{2,2}\omega_2) \\
		&= q_1\alpha_1 + q_2\alpha_2,
	\end{align*}
	which is a contradiction because the set $\{\alpha_1, \alpha_2\}$ is linearly independent in the vector space $\q(\sqrt{d})$. Thus, $\Delta[\alpha_1,\alpha_2]\neq 0$, as desired.
\end{proof}
\medskip

\begin{exercise}
	Let $d \in \z \setminus \{0,1\}$ be a squarefree integer with $d \equiv 2,3 \pmod{4}$. Show that $\Delta[\alpha_1,\alpha_2] \neq 0$ whenever $\{\alpha_1,\alpha_2\}$ is a basis for the $\q$-vector space $\q(\sqrt{d})$.
\end{exercise}



Using Lemma~\ref{intbasis} and Exercise~\ref{ex:trace and discriminant}, we obtain the following important result.

\begin{corollary} \label{cor:discrimvalue}
	Let $d \in \z \setminus \{0,1\}$ be a squarefree integer with $d \equiv 2,3 \pmod{4}$. The discriminant of each integral basis for $\z[\sqrt{d}]$ is in $\z^\bullet$.
\end{corollary}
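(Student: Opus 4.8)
The plan is to recognize this as a direct consequence of the two preceding results. First I would fix an arbitrary integral basis $\{\omega_1, \omega_2\}$ for $\z[\sqrt{d}]$ and write $\omega_1 = a_1 + a_2\sqrt{d}$ and $\omega_2 = b_1 + b_2\sqrt{d}$ with $a_i, b_i \in \z$. By the very definition of an integral basis, $\omega_1, \omega_2$ lie in $\mathcal{O}_{\q(\sqrt{d})} = \z[\sqrt{d}]$, and by Lemma~\ref{intbasis} they form a basis for $\q(\sqrt{d})$ as a $\q$-vector space; in particular, they are linearly independent over $\q$.

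To follow the route advertised in the text --- namely through Lemma~\ref{intbasis} and Exercise~\ref{ex:trace and discriminant} --- I would next apply the explicit formula from Exercise~\ref{ex:trace and discriminant}, which gives $\Delta[\omega_1, \omega_2] = 4d(a_1 b_2 - a_2 b_1)^2$. This expression is manifestly an integer, so only its nonvanishing needs justification. Working in the coordinate basis $\{1, \sqrt{d}\}$, the quantity $a_1 b_2 - a_2 b_1$ is exactly the $2 \times 2$ determinant whose nonvanishing encodes $\q$-linear independence of $\omega_1$ and $\omega_2$; since that independence was secured by Lemma~\ref{intbasis}, we obtain $a_1 b_2 - a_2 b_1 \neq 0$. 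As $d$ is squarefree and different from $0$, the factor $4d$ is nonzero as well, whence $\Delta[\omega_1, \omega_2] = 4d(a_1 b_2 - a_2 b_1)^2 \in \z^\bullet$.

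Alternatively, and even more briefly, one can bypass the explicit formula: the observations of the first paragraph show that $\{\omega_1, \omega_2\}$ is a $\q$-vector space basis for $\q(\sqrt{d})$ contained in $\z[\sqrt{d}]$, which is precisely the hypothesis of Proposition~\ref{prop:discrimvalue}; that proposition then yields $\Delta[\omega_1, \omega_2] \in \z^\bullet$ at once.

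There is no genuine obstacle in either version --- this really is a corollary. The only step that merits a moment's care is the identification of $\q$-linear independence of $\{\omega_1, \omega_2\}$ with the condition $a_1 b_2 - a_2 b_1 \neq 0$, and this becomes transparent as soon as one writes both elements in coordinates relative to $\{1, \sqrt{d}\}$.
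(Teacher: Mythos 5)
Your proof is correct and follows exactly the route the paper intends: the paper derives this corollary from Lemma~\ref{intbasis} together with Exercise~\ref{ex:trace and discriminant} (equivalently, via Proposition~\ref{prop:discrimvalue}), which is precisely what you do. Both of your variants are valid, and the identification of $\q$-linear independence with $a_1 b_2 - a_2 b_1 \neq 0$ is handled correctly.
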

\medskip

\noindent {\bf Notation:} Let $\n$ denote the set of positive integers, and set $\n_0 = \{0\} \cup \n$.
\medskip

\begin{theorem} \label{thm:discrimvalue}
	Let $d \in \z \setminus \{0,1\}$ be a squarefree integer with $d \equiv 2,3 \pmod{4}$. Any two integral bases for $\z[\sqrt{d}]$ have the same discriminant.
\end{theorem}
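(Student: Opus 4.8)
The plan is to compare two integral bases through the integer change-of-basis matrix relating them, and then to exploit the multiplicative behavior of the discriminant under such a change of basis, exactly as computed in~\eqref{discrim}. Let $\{\omega_1, \omega_2\}$ and $\{\eta_1, \eta_2\}$ be two integral bases for $\z[\sqrt{d}]$. Since $\eta_1, \eta_2 \in \z[\sqrt{d}]$ and $\{\omega_1, \omega_2\}$ is an integral basis, there exist integers $z_{i,j}$ with $\eta_i = z_{i,1}\omega_1 + z_{i,2}\omega_2$ for $i = 1,2$; set $Z = (z_{i,j})$. Because $\sigma$ is additive and fixes the integers, applying $\sigma$ to these relations gives
\[
	\left[ \begin{matrix} \eta_1 & \sigma(\eta_1) \\ \eta_2 & \sigma(\eta_2) \end{matrix}\right] = Z \left[ \begin{matrix} \omega_1 & \sigma(\omega_1) \\ \omega_2 & \sigma(\omega_2) \end{matrix}\right].
\]
By Exercise~\ref{ex:trace and discriminant}, the discriminant of a pair of elements of $\z[\sqrt{d}]$ is the square of the determinant of the associated matrix, so taking determinants above and squaring yields the change-of-basis identity
\[
	\Delta[\eta_1, \eta_2] = (\det Z)^2 \, \Delta[\omega_1, \omega_2],
\]
in complete analogy with~\eqref{discrim}.

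The key step, and the only place where it genuinely matters that both families are integral bases rather than arbitrary bases of $\q(\sqrt{d})$ contained in $\z[\sqrt{d}]$, is to establish that $(\det Z)^2 = 1$. For this I would reverse the roles of the two bases: since $\{\eta_1, \eta_2\}$ is also an integral basis and $\omega_1, \omega_2 \in \z[\sqrt{d}]$, there is a second integer matrix $Z'$ expressing the $\omega_i$ as integral combinations of the $\eta_j$. Substituting one set of relations into the other, one obtains $(Z'Z - I)\,(\omega_1,\omega_2)^{\mathsf T} = 0$, and since $\{\omega_1,\omega_2\}$ is linearly independent over $\q$ by Lemma~\ref{intbasis}, it follows that $Z'Z = I$. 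Hence $\det Z \cdot \det Z' = 1$ with both determinants in $\z$, which forces $\det Z = \pm 1$ and therefore $(\det Z)^2 = 1$.

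Combining this with the displayed change-of-basis identity gives $\Delta[\eta_1, \eta_2] = \Delta[\omega_1, \omega_2]$, which is the desired conclusion. I anticipate no serious obstacle in carrying out this plan; the one point that warrants care is the verification that the two integer transition matrices $Z$ and $Z'$ are genuinely inverse to each other, which rests squarely on the uniqueness clause in the definition of an integral basis together with the linear independence supplied by Lemma~\ref{intbasis}.
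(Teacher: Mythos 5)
Your proof is correct and follows essentially the same route as the paper's: both arguments hinge on the change-of-basis identity $\Delta[\eta_1,\eta_2] = (\det Z)^2\,\Delta[\omega_1,\omega_2]$ from Equation~\eqref{discrim}, applied symmetrically to the two integral bases. The only difference is the final step: you explicitly verify that the two integer transition matrices are mutual inverses (via the uniqueness clause in the definition of an integral basis and Lemma~\ref{intbasis}), forcing $\det Z = \pm 1$, whereas the paper instead observes that the two relations make each integer discriminant divide the other and that both have the same sign; the two endings are interchangeable and equally valid.
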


\begin{proof}
	Let $\{\alpha_1, \alpha_2\}$ and $\{\omega_1, \omega_2\}$ be integral bases for $\z[\sqrt{d}]$, and let $z_{i,j}$ be defined as in the proof of Proposition~\ref{prop:discrimvalue}. Since $\Delta[\alpha_1, \alpha_2]$ and $\Delta[\omega_1, \omega_2]$ are both integers, Equation~\eqref{discrim} in the proof of Proposition~\ref{prop:discrimvalue}, along with the fact that $\left(\det \left[\begin{matrix} z_{1,1} & z_{1,2} \\  z_{2,1} & z_{2,2} \end{matrix}\right]\right)^2$ belongs to $\n$, implies that $\Delta[\omega_1,\omega_2]$ divides $\Delta[\alpha_1,\alpha_2]$. Using a similar argument, we can show that $\Delta[\alpha_1,\alpha_2]$ divides $\Delta[\omega_1,\omega_2]$.  As both discriminants have the same sign, $\Delta[\alpha_1,\alpha_2]= \Delta[\omega_1,\omega_2]$.
\end{proof}
\medskip

Using Example~\ref{ex:discriminant of the canonical integral basis} and Example~\ref{ex:canonical integral basis}, we obtain the following corollary.

\begin{corollary}
	Let $d \in \z \setminus \{0,1\}$ be a squarefree integer with $d \equiv 2,3 \pmod{4}$.
	Every integral basis for $\z[\sqrt{d}]$ has discriminant $4d$.
\end{corollary}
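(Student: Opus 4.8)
The plan is to assemble the corollary directly from the three results already established in this section, since no new computation is required. The key observation is that Theorem~\ref{thm:discrimvalue} reduces the problem from a statement about \emph{every} integral basis to a statement about a \emph{single} convenient one: because any two integral bases for $\z[\sqrt{d}]$ share the same discriminant, it suffices to identify one explicit integral basis and evaluate its discriminant.

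First I would invoke Example~\ref{ex:canonical integral basis}, which shows that $\{1,\sqrt{d}\}$ is indeed an integral basis for $\z[\sqrt{d}]$ whenever $d$ is squarefree with $d \equiv 2,3 \pmod 4$. This supplies the concrete basis needed to anchor the computation. Next I would cite Example~\ref{ex:discriminant of the canonical integral basis}, where the trace formulas of Exercise~\ref{ex:trace and discriminant} yield $\Delta[1,\sqrt{d}] = 4d$. Finally, applying Theorem~\ref{thm:discrimvalue} to compare an arbitrary integral basis with $\{1,\sqrt{d}\}$ forces the two discriminants to coincide, so every integral basis for $\z[\sqrt{d}]$ has discriminant $4d$.

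I do not expect any genuine obstacle here, as the statement is a formal consequence of prior results. The only point deserving care is logical bookkeeping: one must confirm that the hypotheses on $d$ (squarefree and $d \equiv 2,3 \pmod 4$) are exactly those under which all three cited results apply, so that Theorem~\ref{thm:discrimvalue}, Example~\ref{ex:canonical integral basis}, and Example~\ref{ex:discriminant of the canonical integral basis} may be chained together without gap. Once that alignment of hypotheses is verified, the corollary follows immediately.
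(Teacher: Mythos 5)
Your proposal is correct and matches the paper's own (implicit) argument exactly: the paper derives the corollary from Example~\ref{ex:canonical integral basis}, Example~\ref{ex:discriminant of the canonical integral basis}, and Theorem~\ref{thm:discrimvalue} in precisely the way you describe. Nothing further is needed.
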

\medskip

\section{General Properties of Ideals} \label{sec:general properties of ideals}

Let $R$ be a commutative ring with identity.  In most beginning algebra classes, the units, irreducibles, and associate elements in $R$ are standard concepts of interest. Recall that the units of $R$ are precisely the invertible elements, while nonunit elements $x, y \in R$ are associates if $a=ub$ for a unit $u$ of $R$.
A nonunit $x \in R^\bullet := R \setminus \{0\}$ is irreducible if whenever $x = uv$ in $R$, then either $u$ or $v$ is a unit. 

To truly understand factorizations in $\fz$, we will need to know first how ideals of $\fz$ are generated. Recall that a subset $I$ of a commutative ring $R$ with identity is called an ideal of $R$ provided that $I$ is a subring with the property that $rI \subseteq I$ for all $r \in R$. It follows immediately that if $x_1, \dots, x_k \in R$, then the set
\[
	I = \langle x_1, \dots, x_k \rangle=\{r_1 x_1 + \dots + r_k x_k \mid \mbox{ each } r_i \in R\}
\]
is an ideal of $R$, that is, the ideal generated by $x_1, \dots, x_k$. Recall that $I$ is said to be principal if $I = \langle x \rangle$ for some $x \in R$, and $R$ is said to be a principal ideal domain (or a~PID) if each ideal of $R$ is principal. The zero ideal $\langle 0 \rangle$ and the entire ring $R = \langle 1 \rangle$ are principal ideals. May it be that all the ideals of $\fz$ are principal?  It turns out that the answer is ``no'' as we shall see in the next example.

\begin{example}\label{ex:a nonprincipal ideal}
	The ring of integers $\fz$ is not a PID. We argue that the ideal 
	\[
		I = \langle 2, 1+\sqrt{-5}\rangle
	\]
	is not principal. If $I=\langle \alpha \rangle$, then $\alpha$ divides both $2$ and $1+\sqrt{-5}$. The reader will verify in Exercise~\ref{ex:irreducible and non-associate} below that both of these elements are irreducible and non-associates. Hence, $\alpha=\pm 1$ and $I=\langle \pm 1\rangle = \fz$. Now we show that $3 \notin I$. Suppose there exist $a, b, c, d \in \z$ so that
	\[
		(a+b\sqrt{-5})2 + (c+d\sqrt{-5})(1+\sqrt{-5}) = 3.
	\]
	Expanding the previous equality, we obtain
	\begin{equation}\label{idealarray}
		\begin{array}{lclclcl}
			2a & + & c & - & 5d & = & 3 \\
			2b & + & c & + & d & = & 0.
		\end{array}
	\end{equation}
	After subtracting, we are left with $2(a-b)-6d=3$, which implies that $2$ divides $3$ in $\z$, a contradiction.
\end{example}
\medskip

\begin{exercise} \label{ex:irreducible and non-associate}
	Show that the elements $2$ and $1+\sqrt{-5}$ are irreducible and non-associates in $\fz$.  (Hint: use the norm function.)
\end{exercise}
\medskip

Let us recall that a proper ideal $I$ of a commutative ring $R$ with identity is said to be prime if whenever $xy\in I$ for $x,y \in R$, then either $x\in I$ or $y\in I$. In addition, we know that an element $p \in R \setminus \{0\}$ is said to be prime provided that the principal ideal $\langle p \rangle$ is prime. It follows immediately that, in any integral domain, every prime element is irreducible.

\begin{exercise} \label{ex:product of ideals inside a prime ideal}
	Let $P$ be an ideal of a commutative ring $R$ with identity. Show that $P$ is prime if and only if the containment $IJ \subseteq P$ for ideals $I$ and $J$ of $R$ implies that either $I \subseteq P$ or $J \subseteq P$.
\end{exercise}
\medskip

\begin{example} \label{ex:a non-prime ideal}
	We argue that the ideal $I = \langle 2 \rangle$ is not prime in $\fz$ and will in fact use Equation~\eqref{2ndzroot5}. Since $(1 - \sqrt{-5})(1 + \sqrt{-5}) = 2 \cdot 3$, it follows that
	\[
		(1 - \sqrt{-5})(1 + \sqrt{-5}) \in \langle 2 \rangle.
	\]
	Now if $1 - \sqrt{-5} \in \langle 2 \rangle$, then there is an element $\alpha \in \fz$ with $1 - \sqrt{-5} = 2\alpha$. But then $\alpha = \frac{1}{2} - \frac{\sqrt{-5}}{2} \notin \fz$, a contradiction. A similar argument works with $1 + \sqrt{-5}$. Hence, $\langle 2 \rangle$ is not a prime ideal in $\fz$.
\end{example}
\medskip

We remind the reader that a proper ideal $I$ of a commutative ring $R$ with identity is called maximal if for each ideal $J$ the containment $I \subseteq J \subseteq R$ implies that either $J = I$ or $J = R$. What we ask the reader to verify in the next exercise is a well-known result from basic abstract algebra.

\begin{exercise} \label{ex:quotient by prime/maximal ideals are domain/field}
	Let $I$ be a proper ideal of a commutative ring $R$ with identity, and let $R/I = \{r + I \mid r \in R\}$ be the quotient ring of $R$ by $I$.
	\begin{enumerate}
		\item Show that $I$ is prime if and only if $R/I$ is an integral domain. \label{subex:quotient by prime}
		\vspace{3pt}
		\item Show that $I$ is maximal if and only if $R/I$ is a field. Deduce that maximal ideals are prime. \label{subex:quotient by maximal}
	\end{enumerate}
\end{exercise}
\medskip

\begin{example} \label{ex:prime ideal dividing <2>}
	Now we shall expand our analysis of $I = \langle 2, 1+\sqrt{-5}\rangle$ in Example~\ref{ex:a nonprincipal ideal} by showing that $I$ is a prime ideal in $\fz$. To do this, we first argue that an element $\alpha = z_1 + z_2\sqrt{-5} \in \fz$ is contained in $I$ if and only if $z_1$ and $z_2$ have the same parity. If $\alpha \in I$, then there are integers $a, b, c$, and $d$ so that
	\[
		z_1 + z_2\sqrt{-5} = (a+b\sqrt{-5})2 + (c+d\sqrt{-5})(1 + \sqrt{-5}).
	\]
	Adjusting the equations from \eqref{idealarray} yields
	\begin{equation}\label{newidealarray}
		\begin{array}{lclclcl}
			2a & + & c & - & 5d & = & z_1 \\
			2b & + & c & + & d & = & z_2.
		\end{array}
	\end{equation}
	Notice that if $c \equiv d \pmod{2}$, then both $z_1$ and $z_2$ are even, while $c \not\equiv d \pmod{2}$ implies that both $z_1$ and $z_2$ are odd. Hence, $z_1$ and $z_2$ must have the same parity. Conversely, suppose that $z_1$ and $z_2$ have the same parity. As, clearly, every element of the form $2k_1+2k_2\sqrt{-5}=2(k_1+k_2\sqrt{-5})$ is in $I$, let us assume that $z_1$ and $z_2$ are both odd. The equations in \eqref{newidealarray} form a linear system that obviously has solutions over $\q$ for any choice of $z_1$ and $z_2$ in $\z$. By solving this system, we find that $a$ and $b$ are dependent variables and 
	\[
		a = \frac{z_1 -c +5d}{2} \quad \mbox{ and } \quad b=\frac{z_2-c -d}{2}.
	\]
	Letting $c$ be any even integer and $d$ any odd integer now yields a solution with both $a$ and $b$ integers. Thus, $z_1+z_2\sqrt{-5}\in I$.
	
	Now consider $\fz /I$. As $I$ is not principal (Example~\ref{ex:a nonprincipal ideal}), $1 \notin I$. Therefore $1 + I \neq 0 + I$. If $c_1+c_2\sqrt{-5}\not\in I$, then $c_1$ and $c_2$ have opposite parity. If $c_1$ is odd and $c_2$ even, then $((c_1-1) + c_2\sqrt{-5}) + I = 0+I$ implies that $(c_1+c_2\sqrt{-5})+I = 1+ I$. If $c_1$ is even and $c_2$ odd, then $((c_1-1) + c_2\sqrt{-5}) + I = 0+I$ again implies that $(c_1+c_2\sqrt{-5})+I = 1+ I$. Hence, $\fz /I \cong \{0+I, 1+I\} \cong \z_2$. Since $\mathbb{Z}_2$ is a  field, $I$ is a maximal ideal and thus prime (by Exercise~\ref{ex:quotient by prime/maximal ideals are domain/field}).
\end{example}
\medskip

\begin{exercise} \label{ex:prime ideals dividing <3>}
	Show that $\langle 3, 1 - 2 \sqrt{-5} \rangle$ and $\langle 3, 1 + 2 \sqrt{-5} \rangle$ are prime ideals in the ring of integers $\fz$.
\end{exercise}
\medskip

Let $R$ be a commutative ring with identity. If every ideal of $R$ is finitely generated, then $R$ is called a \emph{Noetherian ring}. In addition, $R$ satisfies the \emph{ascending chain condition on ideals} (ACC) if every increasing (under inclusion) sequence of ideals of $R$ eventually stabilizes.

\begin{exercise}
	Let $R$ be a commutative ring with identity. Show that $R$ is Noetherian if and only if it satisfies the ACC.
\end{exercise}
\medskip

We shall see in Theorem~\ref{2-gen} that the rings of integers $\z[\sqrt{d}]$ are Noetherian and, therefore, satisfy the ACC.
\medskip

\section{Ideals in $\fz$} \label{sec:ideals inside the ring of integers}

In this section we explore the algebraic structure of all ideals of $\fz$ under ideal multiplication, encapsulating the basic properties of multiplication of ideals. Let us begin by generalizing the notion of an integral basis, which also plays an important role in ideal theory.

\begin{definition}
	Let $K$ be an algebraic number field of dimension $n$ as a vector space over $\q$, and let $I$ be a proper ideal of the ring of integers $\mathcal{O}_K$. We say that the elements $\omega_1, \dots, \omega_n \in I$ form an \emph{integral basis} for $I$ provided that for each $\beta \in I$ there exist unique $z_1,\dots ,z_n \in \z$ satisfying that $\beta = z_1 \omega_1 + \dots + z_n \omega_n$.
\end{definition}
\medskip


With notation as in the above definition, notice that if $\{\omega_1, \dots, \omega_n\}$ is an integral basis for $I$, then $I = \langle \omega_1, \dots, \omega_n \rangle$. Care is needed here as the converse is not necessarily true. For instance, $\{3\}$ is not an integral basis for the ideal $I=\langle 3 \rangle$ of $\fz$ (note  that $3\sqrt{-5} \in I$).

\begin{exercise}
	Argue that $\{3, 3\sqrt{-5}\}$ is an integral basis for the ideal $I = \langle 3 \rangle$ of the ring of integers $\fz$.
\end{exercise}
\medskip

We now show that every proper ideal of $\fz$ has an integral basis.

\begin{theorem} \label{2-gen}
	Let $d \in \z \setminus \{0,1\}$ be a squarefree integer with $d \equiv 2,3 \pmod{4}$. Every nonzero proper ideal of $\z[\sqrt{d}]$ has an integral basis. Hence, every ideal of $\z[\sqrt{d}]$ is finitely generated.
\end{theorem}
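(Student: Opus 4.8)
The plan is to regard the ideal $I$ as a subgroup of the additive group of $\z[\sqrt{d}]$, which by Example~\ref{ex:canonical integral basis} is free abelian of rank two with integral basis $\{1,\sqrt{d}\}$. Producing an integral basis for $I$ then amounts to exhibiting $I$ as a free abelian group of rank two, since the two generators I find will automatically be the required $\omega_1,\omega_2$. Rather than invoking the general structure theorem for subgroups of free abelian groups, I would argue by hand using two naturally associated subgroups of $\z$, so as to keep the exposition self-contained.

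First I would introduce the group homomorphism $\pi\colon\z[\sqrt{d}]\to\z$ defined by $\pi(a+b\sqrt{d})=b$, which records the coefficient of $\sqrt{d}$. Since $\pi$ is additive, the image $\pi(I)$ is a subgroup of $\z$ and hence $\pi(I)=m\z$ for some $m\in\n_0$; likewise $I\cap\z$ (the elements of $I$ carrying no $\sqrt{d}$-part) is a subgroup of $\z$, so $I\cap\z=n\z$ for some $n\in\n_0$. The crucial point, and the step I expect to be the main obstacle, is showing that both $m$ and $n$ are strictly positive, for this is exactly where the hypothesis that $I$ is a \emph{nonzero ideal} (and not merely a subgroup) must be used. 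For $n$, I would take any nonzero $\alpha\in I$ and multiply it by its conjugate $\sigma(\alpha)\in\z[\sqrt{d}]$; since $I$ is an ideal, $\alpha\,\sigma(\alpha)=N(\alpha)$ lies in $I\cap\z$, and $N(\alpha)\neq 0$ because $\alpha\neq 0$, so $n>0$. For $m$, I would again take a nonzero $\alpha=a+b\sqrt{d}\in I$: if $b\neq 0$ then $\pi(\alpha)=b$ already witnesses $m>0$, while if $b=0$ then $\sqrt{d}\,\alpha=a\sqrt{d}\in I$ has $\pi(\sqrt{d}\,\alpha)=a\neq 0$. Either way $\pi(I)\neq\{0\}$, so $m>0$.

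With $m,n>0$ in hand, I would assemble the basis. Choose $\omega_1=n$ and choose $\omega_2=s+m\sqrt{d}\in I$ with $\pi(\omega_2)=m$ (such an $\omega_2$ exists by the definition of $m$). To check spanning, given $\beta=a+b\sqrt{d}\in I$ I would write $b=qm$ with $q\in\z$ (possible since $b\in\pi(I)=m\z$); then $\beta-q\omega_2$ has zero $\sqrt{d}$-coefficient and lies in $I\cap\z=n\z$, so $\beta-q\omega_2=pn=p\omega_1$ for some $p\in\z$, giving $\beta=p\omega_1+q\omega_2$. Uniqueness is immediate: comparing $\sqrt{d}$-coefficients forces $q$ via $m\neq 0$, and comparing rational parts then forces $p$ via $n\neq 0$. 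This establishes that $\{\omega_1,\omega_2\}$ is an integral basis for $I$.

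Finally, the closing assertion that every ideal of $\z[\sqrt{d}]$ is finitely generated follows at once: the ideals $\langle 0\rangle$ and $\z[\sqrt{d}]=\langle 1\rangle$ are principal, and any nonzero proper ideal $I$ satisfies $I=\langle\omega_1,\omega_2\rangle$ by the remark recorded just after the definition of an integral basis for an ideal. I expect the only genuinely delicate point to be the positivity of $m$ and $n$; the spanning and uniqueness arguments are the routine "reduce the top coefficient, then the bottom" computation familiar from the theory of subgroups of $\z^2$.
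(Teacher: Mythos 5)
Your proof is correct, but it takes a genuinely different route from the one in the paper. The paper's argument is nonconstructive: it considers the collection of all subsets of $I$ that are $\q$-vector space bases for $\q(\sqrt{d})$, uses Proposition~\ref{prop:discrimvalue} to know that each such basis has a nonzero integer discriminant, selects one with $|\Delta|$ minimal, and then shows that a failure to be an integral basis would produce a new basis inside $I$ whose discriminant is scaled by $r^2$ with $0<r<1$ --- contradicting minimality. That argument recycles the discriminant machinery built in Section~\ref{sec:integral bases} and is the standard proof that transfers verbatim to degree-$n$ number fields. Your argument instead puts $I$ in Hermite normal form as a subgroup of $\z \oplus \z\sqrt{d}$: you project onto the $\sqrt{d}$-coefficient, identify $\pi(I)=m\z$ and $I\cap\z=n\z$, and exhibit the basis $\{\,n,\ s+m\sqrt{d}\,\}$ explicitly. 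This avoids discriminants entirely, is fully constructive, and cleanly isolates the one place the ideal property (rather than mere subgroup structure) is needed --- your use of $\alpha\,\sigma(\alpha)=N(\alpha)\in I\cap\z^\bullet$ and of $\sqrt{d}\,\alpha\in I$ to force $m,n>0$ is exactly right, since $N(\alpha)\neq 0$ for $\alpha\neq 0$ because a squarefree $d\notin\{0,1\}$ is not a rational square. The trade-off is that your construction as written is specific to rank two (it would need an induction to generalize), whereas the paper's minimal-discriminant argument is the general-purpose one; on the other hand, yours is arguably better suited to the paper's elementary, self-contained aims. Both proofs dispose of the final claim the same way, via the observation following the definition that an integral basis for $I$ generates $I$ as an ideal.
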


\begin{proof}
	
	Let $I$ be a nonzero proper ideal of $\z[\sqrt{d}]$. To find an integral basis for $I$ consider the collection $\mathcal{B}$ of all subsets of $I$ which form a vector space basis for $\q(\sqrt{d})$. Note that if $\{\omega_1, \omega_2\}$ is an integral basis for $\z[\sqrt{d}]$ and $\alpha \in I^\bullet$, then the subset $\{\alpha \omega_1, \alpha \omega_2\}$ of $I$ is also a linearly independent subset inside the vector space $\q(\sqrt{d})$. As a result, the collection $\mathcal{B}$ is nonempty. As $I \subseteq \z[\sqrt{d}]$, Proposition~\ref{prop:discrimvalue} ensures that $\Delta[\delta_1,\delta_2] \in \z^\bullet$ for every member $\{\delta_1, \delta_2\}$ of $\mathcal{B}$. Then we can take a pair $\{\delta_1, \delta_2\}$ in $\mathcal{B}$ and assume that the absolute value of its discriminant, i.e., $|\Delta[\delta_1,\delta_2]|$, is as small as possible. We argue now that $\{\delta_1, \delta_2\}$ is an integral basis for $I$.
	
	Assume, by way of contradiction, that $\{\delta_1, \delta_2\}$ is not an integral basis for $I$. Since $\{\delta_1, \delta_2\}$ is a basis for $\q(\sqrt{d})$ as a vector space over $\q$, there must exist $\beta \in I$  and $q_1, q_2 \in \q$ such that $\beta = q_1 \delta_1 + q_2 \delta_2$, where not both $q_1$ and $q_2$ are in $\z$. Without loss of generality, we can assume that $q_1 \in \q \setminus \z$. Write $q_1 = z + r$, where $z \in \z$ and $0 < r < 1$. Let
	\begin{align*}
		\delta_1^\ast &= \beta - z \delta_1 = (q_1 - z) \delta_1 + q_2 \delta_2 \\
		\delta_2^\ast &=\delta_2.
	\end{align*}
	It is easy to verify that $\{\delta_1^\ast, \delta_2^\ast\}$ is linearly independent and thus is another vector space basis for $\q(\sqrt{d})$ which consists of elements of $I$, that is, $\{\delta_1^\ast, \delta_2^\ast\}$ is a member of $\mathcal{B}$. Proceeding as we did in the proof of Proposition~\ref{prop:discrimvalue}, we find that
	\[
		\Delta[\delta_1^\ast, \delta_2^\ast] = r^2\Delta [\delta_1,\delta_2];
	\]
	this is because $\left(\det\left[ \begin{matrix} q_1 - z\;\;\; & q_2 \\ 0 & 1 \end{matrix}\right]\right)^2 = r^2$. It immediately follows from $0<r<~1$ that $|\Delta[\delta_1^\ast, \delta_2^\ast]| < |\Delta[\delta_1,\delta_2]|$, contradicting the minimality of $| \Delta[\delta_1,\delta_2] |$. Hence, $\{\delta_1,\delta_2\}$ is an integral basis for~$I$, which completes the proof.
\end{proof}
\medskip

Theorem~\ref{2-gen} yields the next important corollary.

\begin{corollary} \label{cor:two-elements generating ideals}
	Let $d \in \z \setminus \{0,1\}$ be a squarefree integer with $d \equiv 2,3 \pmod{4}$. If $I$ is a proper ideal of the ring of integers $\z[\sqrt{d}]$, then there exist elements $\alpha_1, \alpha_2 \in I$ such that $I = \langle \alpha_1, \alpha_2 \rangle$. Thus, $\z[\sqrt{d}]$ is a Noetherian ring.
\end{corollary}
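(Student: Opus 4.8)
The plan is to read the result off almost immediately from Theorem~\ref{2-gen}, after disposing of the two degenerate cases that the theorem does not cover. First I would note that the zero ideal is generated by the two elements $0$ and $0$, so it already has the desired form; hence it suffices to treat a nonzero proper ideal $I$ of $\z[\sqrt{d}]$.

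For such an $I$, I would invoke Theorem~\ref{2-gen} to produce an integral basis $\{\omega_1, \omega_2\}$ for $I$. The crucial step is the observation recorded just after the definition of an integral basis for an ideal: when $\{\omega_1, \omega_2\}$ is an integral basis for $I$, every element of $I$ is an integer linear combination of $\omega_1$ and $\omega_2$, and therefore $I = \langle \omega_1, \omega_2 \rangle$. Taking $\alpha_1 = \omega_1$ and $\alpha_2 = \omega_2$ gives the two-element generating set, which settles the first assertion.

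To conclude that $\z[\sqrt{d}]$ is Noetherian, I would let $J$ be an arbitrary ideal and split into cases according to whether $J$ is proper. If $J = \z[\sqrt{d}]$, then $J = \langle 1 \rangle$ is principal; otherwise $J$ is proper and, by the previous paragraph, is generated by at most two elements. Either way $J$ is finitely generated, so $\z[\sqrt{d}]$ is Noetherian by definition. I do not expect any genuine obstacle, since all the substantive work lives in Theorem~\ref{2-gen}; the only point demanding care is that the theorem speaks solely about nonzero proper ideals, so the zero ideal and the unit ideal must each be handled separately by hand.
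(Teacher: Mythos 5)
Your proposal is correct and matches the paper's (implicit) argument exactly: the paper states this corollary as an immediate consequence of Theorem~\ref{2-gen}, relying on the observation that an integral basis for a nonzero proper ideal is in particular a two-element generating set. Your extra care with the zero ideal and the unit ideal is a reasonable tidying-up of the degenerate cases the theorem does not explicitly cover.
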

\medskip

\begin{remark}
	One can actually say much more. For $d$ as in Corollary~\ref{cor:two-elements generating ideals}, the following stronger statement is true: if $I$ is a nonzero proper ideal of $\z[\sqrt{d}]$ and $\alpha_1 \in I^\bullet$, then there exists $\alpha_2 \in I$ satisfying that $I = \langle \alpha_1, \alpha_2 \rangle$. This condition is known as the $1 \frac{1}{2}$-\emph{generator property}. The interested reader can find a proof of this result in \cite[Theorem~9.3]{PD}.
\end{remark}
\medskip

\begin{definition}
	A pair $(M, *)$, where $M$ is a set and $*$ is a binary operation on $M$, is called a \emph{monoid} if $*$ is associative and there exists $e \in M$ satisfying that $e*x = x*e = x$ for all $x \in M$. The element $e$ is called the \emph{identity element}. The monoid $M$ is called \emph{commutative} if the operation $*$ is commutative.
\end{definition}

Let $R$ be a commutative ring with identity. Recall that we have a natural multiplication on the collection consisting of all ideals of $R$, that is, for any two ideals $I$ and $J$ of $R$, the product
\begin{align} \label{def:ideal multiplication}
	IJ = \big\{\sum_{i=1}^k a_ib_i \ \big{|} \ k \in \n, \ a_1, \dots, a_k \in I, \mbox{ and } b_1, \dots, b_k \in J \big\}
\end{align}
is again an ideal. It is not hard to check that ideal multiplication is both associative and commutative, and satisfies that $R I = I$ for each ideal $I$ of $R$. This amounts to arguing the following exercise.

\begin{exercise} \label{ex:properties of ideal multiplication}
	Let $R$ be a commutative ring with identity. Show that the set of all ideals of $R$ is a commutative monoid under ideal multiplication.
\end{exercise}
\medskip

\begin{example} \label{ex:factorization of <2> into prime ideals}
	To give the reader a notion of how ideal multiplication works, we show that 
	\[
		\langle 2, 1+\sqrt{-5}\rangle^2 = \langle 2\rangle.
	\]
	It follows by~(\ref{def:ideal multiplication}) that ideal multiplication can be achieved by merely multiplying generators. For instance,
	\begin{align*}
		 \langle 2, 1+\sqrt{-5}\rangle^2 &= \langle 2, 1+\sqrt{-5}\rangle \langle 2, 1+\sqrt{-5}\rangle\\
	 	&=\langle 4, 2(1+\sqrt{-5}), 2(1+\sqrt{-5}), -2(2 - \sqrt{-5}) \rangle. 
	\end{align*}
	Since $2$ divides each of the generators of $\langle 2, 1+\sqrt{-5}\rangle^2$ in $\fz$, we clearly have that $\langle 2, 1+\sqrt{-5}\rangle^2 \subseteq \langle 2\rangle$. To verify the reverse inclusion, let us first observe that
	\[
		2\sqrt{-5} = 4 - 2(2 - \sqrt{-5}) \in \langle 2, 1 + \sqrt{-5} \rangle^2.
	\]
	As $2\sqrt{-5} \in \langle 2, 1 + \sqrt{-5} \rangle^2$, one immediately sees that
	\[
		2 = 2 (1 + \sqrt{-5}) - 2\sqrt{-5} \in \langle 2, 1 + \sqrt{-5} \rangle^2.
	\]
	Hence, the inclusion $\langle 2\rangle\subseteq \langle 2, 1+\sqrt{-5}\rangle^2$ holds, and equality follows.
\end{example}
\medskip

\begin{exercise} \label{ex:factorization of <3> into prime ideals}
	Verify that the next equalities hold:
	\begin{align*}
	\langle 3 \rangle &= \langle 3, 1 - 2\sqrt{-5} \rangle \langle 3, 1 + 2\sqrt{-5} \rangle , \\
	\langle 1 - \sqrt{-5} \rangle &= \langle 2, 1 + \sqrt{-5} \rangle \langle 3, 1 + 2\sqrt{-5} \rangle ,\\
	\langle 1 + \sqrt{-5} \rangle &= \langle 2, 1 + \sqrt{-5} \rangle \langle 3, 1 - 2\sqrt{-5} \rangle . 
	\end{align*}
\end{exercise}
\medskip

Example~\ref{ex:factorization of <3> into prime ideals} is no accident. Indeed, every nonprincipal ideal of $\fz$ has a multiple which is a principal ideal as it is established in the following theorem.

\begin{theorem} \label{inverse}
	Let $I$ be an ideal of $\fz$. Then there exists a nonzero ideal $J$ of $\fz$ such that $IJ$ is principal.
\end{theorem}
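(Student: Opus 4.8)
The plan is to take $J$ to be the \emph{conjugate ideal} of $I$ and to prove that the product $IJ$ is generated by a single rational integer. Fix the conjugation automorphism $\sigma(a+b\sqrt{-5}) = a - b\sqrt{-5}$ of $\fz$, and for an ideal $I$ write $\bar I := \sigma(I)$. Since $\sigma$ is a ring automorphism fixing $\z$, the set $\bar I$ is again an ideal, and it is nonzero precisely when $I$ is. If $I$ equals $\langle 0 \rangle$ or all of $\fz$, then $I$ is already principal and $J = \fz$ works; so I would assume $I$ is a nonzero proper ideal and invoke Corollary~\ref{cor:two-elements generating ideals} to write $I = \langle \alpha_1, \alpha_2 \rangle$. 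My candidate is then $J = \bar I = \langle \sigma(\alpha_1), \sigma(\alpha_2) \rangle$, which is nonzero.

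The next step is to read off $I\bar I$ from its four generators $\alpha_i \sigma(\alpha_j)$. Two of them are the rational integers $\alpha_1 \sigma(\alpha_1) = N(\alpha_1)$ and $\alpha_2\sigma(\alpha_2) = N(\alpha_2)$, while the other two, $g := \alpha_1\sigma(\alpha_2)$ and $\sigma(g) = \sigma(\alpha_1)\alpha_2$, are swapped by $\sigma$, so their sum $g + \sigma(g) = \text{Tr}(\alpha_1\sigma(\alpha_2))$ is a rational integer belonging to $I\bar I$. Setting
\[
	n := \gcd\!\big(N(\alpha_1),\, N(\alpha_2),\, \text{Tr}(\alpha_1\sigma(\alpha_2))\big),
\]
I claim $I\bar I = \langle n \rangle$. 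The inclusion $\langle n \rangle \subseteq I\bar I$ is immediate, since $N(\alpha_1)$, $N(\alpha_2)$, and $\text{Tr}(\alpha_1\sigma(\alpha_2))$ all lie in the ideal $I\bar I$, whence so does any $\z$-linear combination of them, in particular their gcd $n$.

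The substance of the argument is the reverse inclusion $I\bar I \subseteq \langle n \rangle$, and this is the step I expect to be the crux. It reduces to showing $g \in \langle n \rangle$, because $N(\alpha_1)$ and $N(\alpha_2)$ are visibly multiples of $n$ and $\sigma(g) \in \langle n \rangle$ will follow by symmetry. The idea is that $g$ is a root of the monic quadratic $x^2 - \text{Tr}(\alpha_1\sigma(\alpha_2))\,x + N(\alpha_1)N(\alpha_2)$, using $g + \sigma(g) = \text{Tr}(\alpha_1\sigma(\alpha_2))$ and $g\,\sigma(g) = N(\alpha_1)N(\alpha_2)$. Dividing through, the element $g/n \in \q(\sqrt{-5})$ then satisfies
\[
	x^2 - \frac{\text{Tr}(\alpha_1\sigma(\alpha_2))}{n}\,x + \frac{N(\alpha_1)N(\alpha_2)}{n^2},
\]
whose coefficients are integers: the linear one because $n \mid \text{Tr}(\alpha_1\sigma(\alpha_2))$, and the constant one because $n^2 \mid N(\alpha_1)N(\alpha_2)$, as $n$ divides each factor.

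Hence $g/n$ is an algebraic integer lying in $\q(\sqrt{-5})$, and by Exercise~\ref{ex:rings of integers of quadratic fields} the ring $\fz$ is exactly the ring of integers of $\q(\sqrt{-5})$; therefore $g/n \in \fz$, that is, $g \in \langle n \rangle$. This yields $I\bar I \subseteq \langle n \rangle$, and combined with the previous paragraph gives $I\bar I = \langle n \rangle$, a principal ideal, so $J = \bar I$ is the desired ideal. The one delicate point is precisely this integrality step: it is what forces the use of the fact that $\fz$ is the full ring of integers (equivalently, integrally closed), rather than any direct divisibility manipulation with the rational and $\sqrt{-5}$-coordinates of $g$, which does not by itself deliver $n \mid g$.
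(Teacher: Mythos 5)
Your proof is correct, and it reaches the same destination as the paper --- both arguments take $J$ to be the conjugate ideal $\bar I = \langle \sigma(\alpha_1), \sigma(\alpha_2)\rangle$ and both show $I\bar I = \langle n\rangle$ for the very same integer $n = \gcd\big(N(\alpha_1), N(\alpha_2), \alpha_1\sigma(\alpha_2)+\sigma(\alpha_1)\alpha_2\big)$, with the inclusion $\langle n\rangle \subseteq I\bar I$ handled identically. Where you genuinely diverge is the reverse inclusion, which as you anticipated is the crux. The paper writes $\alpha_1\sigma(\alpha_2) = (ac+5bd) + (bc-ad)\sqrt{-5}$, normalizes to $\gcd(a,b,c,d)=1$, and then proves $n \mid bc-ad$ and $n \mid ac+5bd$ by a page of explicit divisibility casework, treating the primes $2$ and $5$ separately. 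You instead observe that $g/n$ is a root of the monic quadratic $x^2 - \big(\text{Tr}(g)/n\big)x + N(\alpha_1)N(\alpha_2)/n^2$ with integer coefficients, hence an algebraic integer of $\q(\sqrt{-5})$, hence an element of $\fz$ by Exercise~\ref{ex:rings of integers of quadratic fields}. Your route is shorter, needs no normalization of the generators, and generalizes verbatim to any ring of integers (it is essentially the standard proof, and it isolates exactly where integral closedness is used --- the statement genuinely fails for non-maximal orders such as $\z[\sqrt{-3}]$). What the paper's longer computation buys is self-containedness: it never leans on the unproved Exercise~\ref{ex:rings of integers of quadratic fields}, staying within gcd manipulations in $\z$, consistent with the article's elementary aims. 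Both are valid; yours trades a page of arithmetic for one conceptual input.
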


\begin{proof}
	If $I$ is a principal ideal, then the result follows by letting $J=\langle 1\rangle$.  So suppose $I=\langle \alpha, \beta\rangle$ is not a principal ideal of $\fz$, where $\alpha= a+b\sqrt{-5}$ and $\beta=c+d\sqrt{-5}$. Notice that it is enough to verify the existence of such an ideal $J$ when $\gcd(a,b,c,d) = 1$, and we make this assumption. It is easy to check that $\alpha\overline{\beta}+\overline{\alpha}\beta= 2ac+10bd \in \z$. Hence, $\alpha\overline{\alpha}$,  $\alpha \overline{\beta}+\overline{\alpha}\beta$, and $\beta\overline{\beta}$ are all integers. Let
	\begin{align*}
		f &= \gcd(\alpha\overline{\alpha}, \alpha \overline{\beta}+\overline{\alpha}\beta, \beta \bar{\beta}) \\
		  &= \gcd(a^2 + 5b^2, 2ac + 10bd, c^2 + 5d^2).
	\end{align*}
	Take $J = \langle \overline{\alpha}, \overline{\beta}\rangle$. We claim that $IJ = \langle f \rangle$. Since $f = \gcd(\alpha\overline{\alpha}, \alpha \overline{\beta}+\overline{\alpha}\beta, \beta \overline{\beta})$, there are integers $z_1, z_2$, and $z_3$ so that
	\[
		f = z_1  \alpha\overline{\alpha} + z_2 \beta\overline{\beta} + z_3 (\alpha\overline{\beta}+\overline{\alpha}\beta).
	\]
	Because $IJ = \langle \alpha\overline{\alpha},  \alpha\overline{\beta}, \beta \overline{\alpha}, \beta\overline{\beta}\rangle$, we have that $f$ is a linear combination of the generating elements. Thus, $f \in IJ$ and, therefore, $\langle f \rangle \subseteq IJ$.
	
	To prove the reverse containment, we first show that $f$ divides $bc - ad$. Suppose, by way of contradiction, that this is not the case. Notice that $25 \nmid f$; otherwise $25 \mid a^2 + 5b^2$ and $25 \mid c^2 + 5d^2$ would imply that $5 \mid \gcd(a,b,c,d)$. On the other hand, $4 \mid f$ would imply $4 \mid a^2 + 5b^2$ and $4 \mid c^2 + 5d^2$, forcing $a$, $b$, $c$, and $d$ to be even, which is not possible as $\gcd(a,b,c,d) = 1$. Hence, $4 \nmid f$ and $25 \nmid f$. Because
	\begin{align*}
		2c (a^2 + 5b^2) - a(2ac + 10bd) &= 10b(bc - ad) \\
		2a (c^2 + 5d^2) - c(2ac + 10bd) &= 10d(ad - bc),
	\end{align*}
	$f$ must divide both $10b(bc - ad)$ and $10d (bc - ad)$. As, by assumption, $f \nmid bc - ad$, there must be a prime $p$ and a natural $n$ such that $p^n \mid f$ but $p^n \nmid bc - ad$. If $p = 2$, then $4 \nmid f$ forces $n=1$. In this case, both $a^2 + 5b^2$ and $c^2 + 5d^2$ would be even, and so $2 \mid a-b$ and $2 \mid c-d$, which implies that $2 \mid bc - ad$, a contradiction. Thus, $p \neq 2$. On the other hand, if $p=5$, then again $n=1$. In this case, $5 \mid a^2 + 5b^2$ and $5 \mid c^2 + 5d^2$ and so $5$ would divide both $a$ and $c$, contradicting that $5 \nmid bc - ad$. Then, we can assume that $p \notin \{2,5\}$. As $p^n \mid 10b(bc - ad)$ but $p^n \nmid bc - ad$, we have that $p \mid 10b$. Similarly, $p \mid 10d$. Since $p \notin \{2,5\}$, it follows that $p \mid b$ and $p \mid d$. Now the fact that $p$ divides both $a^2 + 5b^2$ and $c^2 + 5d^2$ yields that $p \mid a$ and $p \mid c$, contradicting that $\gcd(a,b,c,d) = 1$. Hence, $f \mid bc - ad$.
	
	Let us verify now that $f \mid ac + 5bd$. If $f$ is odd, then $f \mid ac + 5bd$. Assume, therefore, that $f = 2f_1$, where $f_1 \in \z$. As $4 \nmid f$, the integer $f_1$ is odd. Now, $f \mid a^2 + 5b^2$ implies that $a$ and $b$ have the same parity. Similarly, one sees that $c$ and $d$ have the same parity. As a consequence, $ac + 5bd$ is even. Since $f_1$ is odd, it must divide $(ac + 5bd)/2$, which means that $f$ divides $ac + 5bd$, as desired.
	
	Because $f$ divides both $\alpha \overline{\alpha}$ and $\beta \overline{\beta}$ in $\z$, proving that $IJ \subseteq \langle f \rangle$ amounts to verifying that $f$ divides both $\alpha \overline{\beta}$ and $\overline{\alpha} \beta$ in $\fz$. Since $f$ divides both $ac + 5 bd$ and $bc - ad$ in $\z$, one has that
	\[
		x = \frac{ac + 5bd}{f} \in \z \quad \text{ and } \quad y = \frac{bc - ad}{f} \in \z.
	\]
	Therefore
	\begin{align*}
		\alpha \overline{\beta} &= ac + 5bd + (bc - ad)\sqrt{-5} = (x + y \sqrt{-5})f \in \langle f \rangle.
	\end{align*}
	Also, $\overline{\alpha} \beta = \overline{\alpha \overline{\beta}} = (x - y \sqrt{-5})f \in \langle f \rangle$. Hence, the reverse inclusion $IJ \subseteq \langle f \rangle$ also holds, which completes the proof.
\end{proof}
\medskip

A commutative monoid $(M,*)$ is said to be \emph{cancellative} if for all $a,b,c \in M$, the equality $a*b = a*c$ implies that $b = c$. By Exercise~\ref{ex:properties of ideal multiplication}, the set
\[
	\cI := \{I \mid I \ \text{is an ideal of } \ \fz\}
\]
is a commutative monoid. As the next corollary states, the set $\cI^\bullet := \cI \setminus \{\langle 0 \rangle \}$  is indeed a commutative cancellative monoid.

\begin{corollary}
	The set $\cI^\bullet$ under ideal multiplication is a commutative cancellative monoid.
\end{corollary}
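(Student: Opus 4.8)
The plan is to leverage Exercise~\ref{ex:properties of ideal multiplication}, which already tells us that $\cI$ is a commutative monoid under ideal multiplication, with identity $\langle 1 \rangle = \fz$. Since commutativity, associativity, and the identity law are automatically inherited by any subset that contains the identity and is closed under the operation, it suffices to check two things: that $\cI^\bullet$ is a submonoid of $\cI$, and that cancellation holds in it. For the submonoid claim, I would note first that $\langle 1 \rangle \neq \langle 0 \rangle$, so the identity lies in $\cI^\bullet$; and second that if $I, J \in \cI^\bullet$, then choosing nonzero $\alpha \in I$ and $\beta \in J$, the product $\alpha\beta$ is a nonzero element of $IJ$ because $\fz$ is an integral domain. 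Hence $IJ \neq \langle 0 \rangle$, so $\cI^\bullet$ is closed under multiplication.

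The substance of the corollary is cancellation, and here the key tool is Theorem~\ref{inverse}. Suppose $AB = AC$ for ideals $A, B, C \in \cI^\bullet$, and aim to conclude that $B = C$. Applying Theorem~\ref{inverse} to $A$ produces a nonzero ideal $J$ of $\fz$ with $AJ$ principal, say $AJ = \langle f \rangle$; here $f \neq 0$ since $A$ and $J$ are nonzero and $\cI^\bullet$ is closed under multiplication. Multiplying both sides of $AB = AC$ by $J$ and using the commutativity and associativity of ideal multiplication, I obtain $\langle f \rangle B = (JA)B = (JA)C = \langle f \rangle C$.

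It then remains to cancel the principal ideal $\langle f \rangle$, and this is where the integral domain structure finishes the job. A short computation shows that $\langle f \rangle B = \{\, f b \mid b \in B \,\}$: every generator-level product $f r b$ with $r \in \fz$ and $b \in B$ equals $f$ times the element $rb \in B$, and conversely each $fb$ lies in $\langle f \rangle B$. Thus $\{\, fb \mid b \in B \,\} = \{\, fc \mid c \in C \,\}$, and since $f \neq 0$ and $\fz$ is an integral domain, multiplication by $f$ is injective, forcing $B = C$. The only point deserving genuine care — and thus the main obstacle — is the existence of the auxiliary ideal $J$ making $AJ$ principal; but that is precisely the content of Theorem~\ref{inverse}, so once it is invoked the remaining cancellation is routine.
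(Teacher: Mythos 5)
Your proposal is correct and follows essentially the same route as the paper: both invoke Theorem~\ref{inverse} to replace the common factor by a principal ideal $\langle f \rangle$ and then cancel the nonzero element $f$ using the integral domain structure. Your write-up is slightly more careful on the minor points the paper leaves implicit (that $\cI^\bullet$ is closed under multiplication, that $f \neq 0$, and that $\langle f \rangle B = \{\, fb \mid b \in B \,\}$), but the substance is identical.
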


\begin{proof}
	Because $\cI$ is a commutative monoid under ideal multiplication, it immediately follows that $\cI^\bullet$ is also a commutative monoid. To prove that $\cI^\bullet$ is cancellative, take $I,J,K \in \cI^\bullet$ such that $IJ = IK$. By Theorem~\ref{inverse}, there exists an ideal $I'$ of $\fz$ and $x \in \fz^\bullet$ with $I'I = \langle x \rangle$. Then
	\[
		\langle x \rangle J = I'IJ = I'IK = \langle x \rangle K.
	\]
	As $x \neq 0$ and the product in $\fz^\bullet$ is cancellative, $J = K$.
\end{proof}
\medskip

\section{The Fundamental Theorem of Ideal Theory} \label{sec:fundamental theorem}

We devote this section to prove a version of the Fundamental Theorem of Ideal Theory for the ring of integers $\fz$. To do this, we need to develop a few tools. In particular, we introduce the concept of a fractional ideal of $\fz$ and show that the set of such fractional ideals is an abelian group.

Let us begin by exploring the relationship between the concepts of prime and maximal ideals. We recall that every proper ideal of a commutative ring $R$ with identity is contained in a maximal ideal, which implies, in particular, that maximal ideals always exist.

\begin{exercise} \label{ex:maximal ideals are prime}
	Show that every maximal ideal of a commutative ring with identity is prime.
\end{exercise}
\medskip

Prime ideals, however, are not necessarily maximal. The following example sheds some light upon this observation.

\begin{example}
	Let $\z[X]$ denote the ring of polynomials with integer coefficients. Clearly, $\z[X]$ is an integral domain. It is not hard to verify that the ideal $\langle X \rangle$ of $\z[X]$ is prime. Because $2 \notin \langle X \rangle$, one obtains that $\langle X \rangle \subsetneq \langle 2, X \rangle$. It is left to the reader to argue that $\langle 2, X \rangle$ is a proper ideal of $\z[X]$. Since $\langle X \rangle \subsetneq \langle 2, X \rangle \subsetneq \z[X]$, it follows that $\langle X \rangle$ is not a maximal ideal of $\z[X]$.
	(An alternate argument can easily be given using Exercise~\ref{ex:quotient by prime/maximal ideals are domain/field}.)
\end{example}
\medskip

In the ring of integers $\mathcal{O}_K$ of any algebraic number field $K$, every nonzero prime ideal is maximal (see, for instance, \cite[Proposition~5.21]{Ja}). Let us establish this result here for our case of interest.

\begin{proposition} \label{prop:prime ideals are maximal}
	Let $d \in \z \setminus \{0,1\}$ be a squarefree integer with $d \equiv 2,3 \pmod{4}$. Then every nonzero prime ideal of $\z[\sqrt{d}]$ is maximal.
\end{proposition}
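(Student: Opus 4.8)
The plan is to exploit the two equivalences packaged in Exercise~\ref{ex:quotient by prime/maximal ideals are domain/field}: a proper ideal $P$ is prime exactly when $\z[\sqrt{d}]/P$ is an integral domain, and maximal exactly when $\z[\sqrt{d}]/P$ is a field. Since a \emph{finite} integral domain is automatically a field, the entire statement collapses to a single assertion: the quotient $\z[\sqrt{d}]/P$ is finite whenever $P$ is a nonzero prime ideal. So I would aim squarely at proving finiteness of that quotient.

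First I would manufacture a nonzero rational integer lying in $P$. Choose any $\alpha \in P^\bullet$ and consider its norm $N(\alpha) = \alpha \sigma(\alpha)$, where $\sigma(x + y\sqrt{d}) = x - y\sqrt{d}$. Since $\sigma(\alpha) \in \z[\sqrt{d}]$ and $\alpha \in P$, the product $N(\alpha)$ lies in $P$; moreover $N(\alpha)$ is a nonzero integer because $\alpha \neq 0$. Setting $n = |N(\alpha)| \in \n$, I obtain $n \in P$ and hence $\langle n \rangle \subseteq P$.

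Next I would bound the size of the quotient by comparing against the quotient by $\langle n \rangle$. Using the integral basis $\{1, \sqrt{d}\}$ from Example~\ref{ex:canonical integral basis}, every element of $\z[\sqrt{d}]$ is uniquely $a + b\sqrt{d}$ with $a,b \in \z$, and reducing both coordinates modulo $n$ shows that $\z[\sqrt{d}]/\langle n \rangle$ has exactly $n^2$ elements (finiteness being all that I actually need). Since $\langle n \rangle \subseteq P$, the ring $\z[\sqrt{d}]/P$ is a homomorphic image of the finite ring $\z[\sqrt{d}]/\langle n \rangle$, so it too is finite.

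To finish, I would record the elementary fact that a finite integral domain $D$ is a field: for nonzero $a \in D$ the multiplication map $x \mapsto ax$ is injective by cancellation, hence surjective on the finite set $D$, so some $x$ satisfies $ax = 1$. Applying this to $D = \z[\sqrt{d}]/P$, which is a domain because $P$ is prime, shows the quotient is a field, whence $P$ is maximal by Exercise~\ref{ex:quotient by prime/maximal ideals are domain/field}. The only genuinely delicate point is the first step—guaranteeing that $P$ meets $\z$ in a nonzero element—and that is precisely where the norm and the hypothesis $P \neq \langle 0 \rangle$ do the work; everything afterward is bookkeeping.
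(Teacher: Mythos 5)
Your proposal is correct and follows essentially the same route as the paper: use the norm to place a positive integer $n$ in $P$, reduce coordinates with respect to the integral basis modulo $n$ to see that $\z[\sqrt{d}]/P$ is finite, and conclude via the fact that a finite integral domain is a field. The only cosmetic difference is that you pass through the quotient $\z[\sqrt{d}]/\langle n \rangle$ and a surjection onto $\z[\sqrt{d}]/P$, whereas the paper counts coset representatives of $P$ directly; you also correctly take $|N(\alpha)|$, which is a slight tidying of the paper's $n := N(\beta)$.
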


\begin{proof}
	Let $P$ be a nonzero prime ideal in $\z[\sqrt{d}]$, and let $\{\omega_1, \omega_2\}$ be an integral basis for $\z[\sqrt{d}]$. Fix $\beta \in P^\bullet$. Note that $n := N(\beta) = \beta \bar{\beta} \in P \cap \n$. Consider the finite subset
	\[
		S = \big\{ n_1 \omega_1 + n_2 \omega_2 + P \mid n_1, n_2 \in \{0,1, \dots, n-1\} \big\}
	\]
	of $\z[\sqrt{d}]/P$. Take $x \in \z[\sqrt{d}]$. As $\{\omega_1, \omega_2\}$ is an integral basis, there exist $z_1, z_2 \in \z$ such that $x = z_1 \omega_1 + z_2 \omega_2$ and, therefore, $x + P = n_1 \omega_1 + n_2 \omega_2 + P \in S$, where $n_i \in \{0, \dots, n-1\}$ and $n_i \equiv z_i \pmod{n}$. Hence, $\z[\sqrt{d}]/P = S$, which implies that $\z[\sqrt{d}]/P$ is finite. It follows by Exercise~\ref{ex:quotient by prime/maximal ideals are domain/field}(\ref{subex:quotient by prime}) that $\z[\sqrt{d}]/P$ is an integral domain. As a result, $\z[\sqrt{d}]/P$ is a field (see Exercise~\ref{ex:finite domains are fields} below). Thus, Exercise~\ref{ex:quotient by prime/maximal ideals are domain/field}(\ref{subex:quotient by maximal}) guarantees that $P$ is a maximal ideal.
\end{proof}
\medskip

\begin{exercise} \label{ex:finite domains are fields}
	Let $R$ be a finite integral domain. Show that $R$ is a field.
\end{exercise}
\medskip

Although the concepts of (nonzero) prime and maximal ideals coincide in $\z[\sqrt{d}]$, we will use both terms depending on the ideal property we are willing to apply.

\begin{lemma} \label{lem:some product of prime ideals is contained in a given ideal}
	If $I$ is a nonzero ideal of a Noetherian ring $R$, then there exist nonzero prime ideals $P_1, \dots, P_n$ of $R$ such that $P_1 \cdots P_n \subseteq I$.
\end{lemma}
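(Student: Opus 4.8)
The plan is to argue by \emph{Noetherian induction}, exploiting the equivalence between the Noetherian condition and the ACC. Consider the collection $\Sigma$ of all nonzero ideals of $R$ that fail to contain a product of finitely many nonzero prime ideals, and suppose toward a contradiction that $\Sigma$ is nonempty. Since $R$ satisfies the ACC, every nonempty family of ideals of $R$ has a maximal element (a standard consequence of the ACC, which I would verify first), so I can fix an ideal $I$ that is maximal in $\Sigma$.

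The first thing to observe is that $I$ cannot be prime: a nonzero prime ideal trivially contains the product consisting of itself (the case $n = 1$), so it would not belong to $\Sigma$. In particular $I \neq R$. Because $I$ is a proper nonzero ideal that is not prime, there exist $x, y \in R$ with $xy \in I$ but $x \notin I$ and $y \notin I$. Then the ideals $I + \langle x \rangle$ and $I + \langle y \rangle$ both \emph{strictly} contain $I$ and are nonzero, so the maximality of $I$ in $\Sigma$ forces neither of them to lie in $\Sigma$. Hence each contains a product of nonzero prime ideals, say $P_1 \cdots P_m \subseteq I + \langle x \rangle$ and $Q_1 \cdots Q_k \subseteq I + \langle y \rangle$.

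The crux is the containment $(I + \langle x \rangle)(I + \langle y \rangle) \subseteq I$. Expanding this product of ideals, every generator is a sum of terms lying in $I^2$, $\langle x \rangle I$, $I \langle y \rangle$, or $\langle xy \rangle$; the first three are contained in $I$ because $I$ is an ideal, and $\langle xy \rangle \subseteq I$ since $xy \in I$. Combining this with the monotonicity of ideal multiplication (if $A \subseteq B$ and $C \subseteq D$, then $AC \subseteq BD$), I obtain
\[
	P_1 \cdots P_m \, Q_1 \cdots Q_k \subseteq (I + \langle x \rangle)(I + \langle y \rangle) \subseteq I,
\]
exhibiting a product of nonzero prime ideals inside $I$ and contradicting $I \in \Sigma$. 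Therefore $\Sigma = \emptyset$, which is exactly the assertion of the lemma.

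I expect the main subtlety to be the bookkeeping in that last containment together with the edge case $I = R$, since the step producing $x$ and $y$ requires $I$ to be a proper ideal. This is handled by noting that $R$ always contains a nonzero prime ideal whenever $R$ is not a field: any maximal ideal is prime (as in Exercise~\ref{ex:maximal ideals are prime}) and is nonzero precisely because $\langle 0 \rangle$ is not maximal when $R$ is not a field, so $R \notin \Sigma$ and the maximal counterexample is automatically proper. For the ring of integers $\fz$ of interest, nonzero prime ideals certainly exist, so this point causes no difficulty in the intended application.
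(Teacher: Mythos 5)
Your proof is correct and follows essentially the same route as the paper's: take a maximal counterexample via the ACC, note it cannot be prime, and derive the contradiction from $(I+J)(I+K)\subseteq I$ where $I+J$ and $I+K$ strictly contain $I$. The only cosmetic difference is that you extract elements $x,y$ from the failure of primality and use $J=\langle x\rangle$, $K=\langle y\rangle$, whereas the paper invokes the ideal-theoretic characterization of prime ideals (Exercise~\ref{ex:product of ideals inside a prime ideal}) to produce the ideals $J$ and $K$ directly; your explicit attention to the edge case $I=R$ is a point the paper glosses over.
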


\begin{proof}
	Assume, by way of contradiction, that the statement of the lemma does not hold. Because $R$ is a Noetherian ring and, therefore, satisfies the ACC, there exists an ideal $I$ of $R$ that is maximal among all the ideals failing to satisfy the statement of the lemma. Clearly, $I$ cannot be prime. By Exercise~\ref{ex:product of ideals inside a prime ideal}, there exist ideals $J$ and $K$ of $R$ such that $JK \subseteq I$ but neither $J \subseteq I$ nor $K \subseteq I$. Now notice that the ideals $J' = I + J$ and $K' = I + K$ both strictly contain $I$. The maximality of $I$ implies that both $J'$ and $K'$ contain products of nonzero prime ideals. Now the fact that $J'K' \subseteq I$ would also imply that $I$ contains a product of nonzero prime ideals, a contradiction.
\end{proof}
\medskip

Recall that if $R$ is an integral domain contained in a field $F$, then the field of fractions of $R$ is the smallest subfield of $F$ containing $R$. If $K$ is an algebraic number field, then it is not hard to argue that the field of fractions of $\mathcal{O}_K$ is precisely $K$.

\begin{definition}
	Let $R$ be an integral domain with field of fractions $F$. A \emph{fractional ideal} of $R$ is a subset of $F$ of the form $\alpha^{-1}I$, where $\alpha \in R^\bullet$ and $I$ is an ideal of $R$. 
\end{definition}
\medskip


With notation as in the previous definition, it is clear that every ideal of $R$ is a fractional ideal. However, fractional ideals are not necessarily ideals. The product of fractional ideals is defined similarly to the product of standard ideals. Therefore it is easily seen that the product of two fractional ideals is again a fractional ideal. Indeed, for elements $\alpha$ and $\beta$ of $R^\bullet$ and for ideals $I$ and $J$ of $R$, we only need to observe that $(\alpha^{-1}I)(\beta^{-1}J) = (\alpha \beta)^{-1} IJ$.
\medskip

\noindent {\bf Notation:} Let $\mathcal{F}$ denote the set of all fractional ideals of $\fz$, and let $\mathcal{F}^\bullet$ denote the set $\mathcal{F} \setminus \{\langle 0 \rangle \}$.
\medskip

\begin{definition}
	Let $R$ be an integral domain with field of fractions $F$. For a fractional ideal $I$ of $R$, the set
	\[
		I^{-1} := \{\alpha \in F \mid \alpha I \subseteq R\}
	\]
	is called the \emph{inverse} of $I$.
\end{definition}
\medskip

\begin{exercise} \label{ex:inverse ideals are fractional}
	Show that the inverse of a fractional ideal is again a fractional ideal.
\end{exercise}
\medskip

\begin{lemma} \label{lem:inverse of proper ideals strictly contain the ring}
	Let $d \in \z \setminus \{0,1\}$ be a squarefree integer with $d \equiv 2,3 \pmod{4}$. If $I$ is a proper ideal of the ring of integers $\z[\sqrt{d}]$, then $\z[\sqrt{d}]$ is strictly contained in the fractional ideal $I^{-1}$.
\end{lemma}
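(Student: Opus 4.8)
The plan is to first reduce to the case of a maximal ideal and then manufacture an explicit element of $I^{-1}$ lying outside the ring, using the factorization tool from Lemma~\ref{lem:some product of prime ideals is contained in a given ideal}. Throughout I write $R = \z[\sqrt{d}]$ and let $F = \q(\sqrt{d})$ be its field of fractions. The first observation is that $R \subseteq I^{-1}$ always holds: since $I$ is an ideal, $rI \subseteq I \subseteq R$ for every $r \in R$, so $r \in I^{-1}$. Hence the entire content of the statement is the \emph{strictness} of this inclusion, i.e., the existence of some $\alpha \in I^{-1} \setminus R$. For the reduction, note that if $I \subseteq M$ then $M^{-1} \subseteq I^{-1}$, because $\alpha M \subseteq R$ forces $\alpha I \subseteq \alpha M \subseteq R$. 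Every proper ideal $I$ is contained in a maximal ideal $M$, so it suffices to exhibit an element of $M^{-1} \setminus R$ for a maximal ideal $M$; such an element lies automatically in $I^{-1} \setminus R$.

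So I would fix a maximal ideal $M$ (which is nonzero and prime) and choose $a \in M^\bullet$; note that $a$ is a nonunit, so $\langle a \rangle$ is a proper ideal. Since $R$ is Noetherian by Corollary~\ref{cor:two-elements generating ideals}, Lemma~\ref{lem:some product of prime ideals is contained in a given ideal} applies to the nonzero ideal $\langle a \rangle \subseteq M$, producing nonzero prime ideals $P_1, \dots, P_n$ with $P_1 \cdots P_n \subseteq \langle a \rangle \subseteq M$, and I would take $n$ to be minimal with this property. Because $M$ is prime and contains the product $P_1 \cdots P_n$, Exercise~\ref{ex:product of ideals inside a prime ideal} yields $P_i \subseteq M$ for some index, say $i = 1$. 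But $P_1$ is a nonzero prime ideal, hence maximal by Proposition~\ref{prop:prime ideals are maximal}, and since $P_1 \subseteq M$ with both maximal we conclude $P_1 = M$.

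Now I would exploit minimality of $n$: the product $P_2 \cdots P_n$ (read as $R$ when $n = 1$) is not contained in $\langle a \rangle$, so I can pick $b \in (P_2 \cdots P_n) \setminus \langle a \rangle$ and set $\alpha = b/a \in F$. On one hand $\alpha \notin R$, since otherwise $b = a\alpha \in \langle a \rangle$, a contradiction. On the other hand $bM = bP_1 \subseteq P_1 P_2 \cdots P_n \subseteq \langle a \rangle$, so for each $m \in M$ we may write $bm = ar$ with $r \in R$, whence $\alpha m = r \in R$; that is, $\alpha M \subseteq R$ and $\alpha \in M^{-1}$. This gives the desired element of $M^{-1} \setminus R$, which through the reduction of the first paragraph yields $R \subsetneq I^{-1}$.

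The main obstacle is the bookkeeping around the minimality of $n$: one must argue that deleting the factor $P_1 = M$ leaves a product genuinely escaping $\langle a \rangle$. In the degenerate case $n = 1$ this amounts to checking that the empty product $R$ is not contained in $\langle a \rangle$, which holds precisely because $a$ is a nonunit, so one may simply take $b = 1$ and $\alpha = 1/a$. Once $P_1 = M$ has been identified, the remaining verifications that $\alpha \notin R$ and $\alpha M \subseteq R$ are short.
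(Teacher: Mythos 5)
Your proposal is correct and follows essentially the same route as the paper: reduce to a maximal ideal $M \supseteq I$, apply Lemma~\ref{lem:some product of prime ideals is contained in a given ideal} to $\langle a \rangle$ for $a \in M^\bullet$ with a minimal number of prime factors, identify one factor with $M$ via Proposition~\ref{prop:prime ideals are maximal}, and use an element $b$ of the remaining product outside $\langle a \rangle$ to produce $b/a \in M^{-1} \setminus \z[\sqrt{d}]$. Your explicit treatment of the degenerate case $n=1$ (where the empty product is $R$ and one takes $b=1$) is a small point the paper leaves implicit, but the argument is otherwise the same.
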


\begin{proof}
	Since $I$ is a proper ideal of $\z[\sqrt{d}]$, then there exists a maximal ideal $M$ of $\z[\sqrt{d}]$ containing $I$. Fix $\alpha \in M^\bullet$. By the definition of the inverse of an ideal, $\z[\sqrt{d}] \subseteq M^{-1}$. Since $\z[\sqrt{d}]$ is a Noetherian ring, Lemma~\ref{lem:some product of prime ideals is contained in a given ideal} ensures the existence of $m \in \n$ and prime ideals $P_1, \dots, P_m$ in $\z[\sqrt{d}]$ such that $P_1 \cdots P_m \subseteq \langle \alpha \rangle \subseteq M$. Assume that $m$ is the minimum natural number satisfying this property. Since $M$ is a prime ideal (Exercise~\ref{ex:maximal ideals are prime}), by Exercise~\ref{ex:product of ideals inside a prime ideal} there exists $P \in \{P_1, \dots, P_m\}$ such that $P \subseteq M$. There is no loss of generality in assuming that $P = P_1$. Now it follows by Proposition~\ref{prop:prime ideals are maximal}, the ideal $P_1$ is maximal, which implies that $P_1 = M$. By the minimality of $m$, there exists $\alpha' \in P_2 \cdots P_m \setminus \langle \alpha \rangle$. Thus, $\alpha^{-1} \alpha' \notin \z[\sqrt{d}]$ and $\alpha' M = \alpha' P_1 \subseteq P_1 \cdots P_m \subseteq \langle \alpha \rangle$, that is $\alpha^{-1}\alpha' M \subseteq \langle 1 \rangle = \z[\sqrt{d}]$. As a result, $\alpha^{-1} \alpha' \in M^{-1} \setminus \z[\sqrt{d}]$. Hence, we find that $\z[\sqrt{d}] \subsetneq M^{-1} \subseteq I^{-1}$, and the proof follows.
\end{proof}
\medskip

We focus throughout the remainder of our work on the ring of integers $\fz$.  This, via Theorem~\ref{inverse}, will substantially simplify our remaining arguments.

\begin{lemma} \label{lem:multiplying ideal by element of Q(sqrt{-5})}
	If $I \in \cI^\bullet$ and $\alpha \in \fq$, then $\alpha I \subseteq I$ implies $\alpha \in \fz$.
\end{lemma}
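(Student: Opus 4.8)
The plan is to recognize this as the classical ``determinant trick'' characterizing integrality, and to feed it the integral basis supplied by Theorem~\ref{2-gen}. First I would dispense with the case $I = \fz$: then $1 \in I$, so $\alpha = \alpha \cdot 1 \in \alpha I \subseteq I = \fz$, and there is nothing more to prove. Hence I may assume that $I$ is a nonzero \emph{proper} ideal of $\fz$, which is precisely the setting of Theorem~\ref{2-gen}.

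Next I would fix an integral basis $\{\omega_1, \omega_2\}$ for $I$, whose existence is guaranteed by Theorem~\ref{2-gen}. Since $\alpha I \subseteq I$, both $\alpha\omega_1$ and $\alpha\omega_2$ lie in $I$, so the defining property of an integral basis yields \emph{unique integers} $a_{ij} \in \z$ with
\[
	\alpha \omega_1 = a_{11}\omega_1 + a_{12}\omega_2 \qquad \text{and} \qquad \alpha \omega_2 = a_{21}\omega_1 + a_{22}\omega_2.
\]
Collecting these into the matrix $A = (a_{ij}) \in M_2(\z)$ and the column vector $\mathbf{v} = (\omega_1, \omega_2)^{\mathsf T}$, the two relations become the single equation $(\alpha\,\mathrm{Id} - A)\mathbf{v} = \mathbf 0$. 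Because $I = \langle \omega_1, \omega_2 \rangle$ is nonzero, at least one of $\omega_1, \omega_2$ is nonzero, so $\mathbf v \neq \mathbf 0$.

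The hard part --- the conceptual crux I expect to require the most care in the write-up --- is extracting integrality from this relation. As $\alpha\,\mathrm{Id} - A$ annihilates the nonzero vector $\mathbf v$, it must be singular, so $\det(\alpha\,\mathrm{Id} - A) = 0$. Expanding this $2 \times 2$ determinant gives
\[
	\alpha^2 - (a_{11} + a_{22})\,\alpha + (a_{11}a_{22} - a_{12}a_{21}) = 0,
\]
a \emph{monic} polynomial equation in $\alpha$ whose coefficients (the trace and determinant of $A$) are integers. Thus $\alpha$ is an algebraic integer.

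Finally I would close the loop: $\alpha$ is an algebraic integer lying in $\fq$, so it belongs to the ring of integers of $\fq$; by Exercise~\ref{ex:rings of integers of quadratic fields} (applied with $d = -5$, noting $-5 \equiv 3 \pmod 4$) that ring of integers is exactly $\fz$. Hence $\alpha \in \fz$, completing the argument.
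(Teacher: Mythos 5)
Your proof is correct, but it takes a genuinely different route from the paper's. The paper argues in three lines from Theorem~\ref{inverse}: it picks a nonzero ideal $J$ with $IJ = \langle \beta \rangle$ principal, observes that $\alpha \langle \beta \rangle = \alpha I J \subseteq IJ = \langle \beta \rangle$, and cancels the nonzero element $\beta$ to get $\alpha \in \fz$. You instead run the classical determinant trick: Theorem~\ref{2-gen} hands you an integral basis $\{\omega_1,\omega_2\}$ of $I$ (after you correctly peel off the trivial case $I = \fz$), the stability $\alpha I \subseteq I$ produces an integer matrix $A$ with $A\mathbf v = \alpha \mathbf v$ for the nonzero vector $\mathbf v = (\omega_1,\omega_2)^{\mathsf T}$, and the vanishing of $\det(\alpha\,\mathrm{Id}-A)$ exhibits $\alpha$ as a root of a monic quadratic over $\z$; then Exercise~\ref{ex:rings of integers of quadratic fields} identifies the algebraic integers of $\fq$ with $\fz$. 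Both arguments are sound, and each leans on a different previously established pillar: the paper's is shorter because Theorem~\ref{inverse} (invertibility of ideals up to a principal multiple) has already done the heavy lifting, but that theorem is proved by an explicit quadratic-field computation. Your version is the one that generalizes: it works verbatim in the ring of integers of any number field (indeed for any element stabilizing a finitely generated faithful module over an integrally closed domain), and it makes transparent that the lemma is really the statement that $\fz$ is integrally closed. The only dependency worth flagging is that Exercise~\ref{ex:rings of integers of quadratic fields} is left unproven in the text, but the paper treats its exercises as available throughout, so this is consistent with its conventions.
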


\begin{proof}
	Let $I$ and $\alpha$ be as in the statement of the lemma. By Theorem~\ref{inverse}, there exists a nonzero ideal $J$ of $\fz$ such that $IJ = \langle \beta \rangle$ for some $\beta \in \fz$. Then $\alpha \langle \beta \rangle = \alpha IJ \subseteq IJ = \langle \beta \rangle$, which means that $\alpha \beta = \sigma \beta$ for some $\sigma \in \fz$. As $\beta \neq 0$, it follows that $\alpha = \sigma \in \fz$.
\end{proof}
\medskip

\begin{theorem} \label{thm:the set of fractional ideals forms a group}
	The set $\mathcal{F}^\bullet$ is an abelian group under multiplication of fractional ideals.
\end{theorem}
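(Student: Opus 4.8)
The plan is to verify the abelian-group axioms for $(\mathcal{F}^\bullet, \cdot)$ one at a time, observing that the genuinely hard work has already been done in Theorem~\ref{inverse}. Commutativity and associativity I would inherit from ordinary ideal multiplication: the identity $(\alpha^{-1}I)(\beta^{-1}J) = (\alpha\beta)^{-1}IJ$ recorded just before the statement shows that multiplying fractional ideals reduces to multiplying the underlying ideals, and the latter operation is commutative and associative by Exercise~\ref{ex:properties of ideal multiplication}. The element $\langle 1 \rangle = \fz$ lies in $\mathcal{F}^\bullet$ and satisfies $\langle 1 \rangle \mathcal{L} = \mathcal{L}$ for every fractional ideal $\mathcal{L}$, so it serves as the identity. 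For closure inside $\mathcal{F}^\bullet$ I would note that a product of fractional ideals is again a fractional ideal and that it is nonzero: if $\mathcal{L}_1, \mathcal{L}_2 \in \mathcal{F}^\bullet$ contain nonzero elements $x_1, x_2$, then $x_1 x_2 \neq 0$ lies in $\mathcal{L}_1\mathcal{L}_2$ since $\fq$ is a field. This leaves the existence of inverses as the only substantive point.

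To produce inverses I would first reduce to integral ideals. Every nonzero fractional ideal has the form $\mathcal{L} = \gamma^{-1} I$ with $\gamma \in \fz^\bullet$ and $I$ a nonzero ideal of $\fz$. By Theorem~\ref{inverse} there is a nonzero ideal $J$ of $\fz$ and an element $\beta \in \fz^\bullet$ with $IJ = \langle \beta \rangle$. I then propose the candidate $\mathcal{M} = \gamma \beta^{-1} J$, which is a nonzero fractional ideal because it equals $\beta^{-1}(\gamma J)$ with $\gamma J$ a nonzero ideal of $\fz$. A one-line computation finishes the job:
\[
	\mathcal{L}\,\mathcal{M} = (\gamma^{-1} I)(\gamma \beta^{-1} J) = \beta^{-1} IJ = \beta^{-1}\langle \beta \rangle = \langle 1 \rangle = \fz .
\]
Hence $\mathcal{M}$ is an inverse of $\mathcal{L}$ in $\mathcal{F}^\bullet$. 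If desired, one checks that this $\mathcal{M}$ agrees with the formally defined inverse $\mathcal{L}^{-1}$ (the inclusion $\mathcal{M} \subseteq \mathcal{L}^{-1}$ is immediate from $\mathcal{M}\mathcal{L} \subseteq \fz$, and the reverse follows from $\mathcal{L}\mathcal{L}^{-1} \subseteq \fz$), but for the group structure only the existence of some inverse is needed.

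I expect the perceived obstacle to be essentially absent here, precisely because it has been front-loaded into Theorem~\ref{inverse}: once one knows that every ideal admits a nonzero multiple that is principal, inverting fractional ideals is immediate, and the remaining difficulties are pure bookkeeping, namely the passage from a fractional ideal to its integral part $I$ and the verification that $\gamma\beta^{-1}J$ is a bona fide nonzero fractional ideal. The point worth flagging is that, had Theorem~\ref{inverse} not been available (as it would not be for a general ring of integers treated without this shortcut), the real engine would instead be Lemma~\ref{lem:inverse of proper ideals strictly contain the ring}, which supplies a proper containment $\fz \subsetneq I^{-1}$, together with Lemma~\ref{lem:multiplying ideal by element of Q(sqrt{-5})}, which rules out the degenerate possibility $I I^{-1} \subsetneq \fz$ for a maximal ideal; that is where the genuine work of establishing $I I^{-1} = \fz$ lives, and it is exactly this work that Theorem~\ref{inverse} lets us bypass for $\fz$.
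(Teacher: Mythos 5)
Your proof is correct, but it reaches the goal by a genuinely different (and shorter) route than the paper. Where you invert an arbitrary nonzero fractional ideal $\gamma^{-1}I$ in one stroke by invoking Theorem~\ref{inverse} to get $IJ = \langle \beta \rangle$ and writing down $\gamma\beta^{-1}J$ explicitly, the paper instead proves invertibility in three stages: first it shows $MM^{-1} = \fz$ for every \emph{maximal} ideal $M$ (this is where Lemma~\ref{lem:inverse of proper ideals strictly contain the ring} and Lemma~\ref{lem:multiplying ideal by element of Q(sqrt{-5})} do the work), then it runs a Noetherian induction --- taking a nonzero non-invertible ideal $J$ maximal under inclusion and deriving a contradiction via $J \subsetneq JM^{-1} \subseteq \fz$ --- to get $JJ^{-1} = \fz$ for all ideals, and only then passes to fractional ideals exactly as you do. Your shortcut is legitimate because Theorem~\ref{inverse} is proved in Section~4 by a self-contained computation with conjugates, so there is no circularity; and your parenthetical remark identifying your explicit inverse with the formally defined $\mathcal{L}^{-1}$ is not optional decoration but actually needed downstream, since the proof of Theorem~\ref{FTIT} manipulates $P^{-1}$ in the formal sense and relies on $PP^{-1} = \fz$. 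What the paper's longer route buys is exactly what you flag at the end: it is the argument that survives in a general ring of integers, where no analogue of Theorem~\ref{inverse} is available, and it puts Lemmas~\ref{lem:inverse of proper ideals strictly contain the ring} and~\ref{lem:multiplying ideal by element of Q(sqrt{-5})} to work, both of which the paper needs again almost verbatim in the proof of the Fundamental Theorem of Ideal Theory. Your route buys brevity and makes transparent that, for $\fz$, the entire group structure is a formal consequence of the explicit conjugate-ideal construction.
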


\begin{proof}
	Clearly, multiplication of fractional ideals is associative. In addition, it immediately follows that the fractional ideal $\fz = 1^{-1} \langle 1 \rangle$ is the identity. The most involved part of the proof consists in arguing that each fractional ideal is invertible.
	
	Let $M \in \cI^\bullet$ be a maximal ideal of $\fz$. By definition of $M^{-1}$, we have that $M M^{-1} \subseteq \fz$, which implies that $M M^{-1} \in \cI^\bullet$. As $M = M \fz \subseteq M M^{-1}$ and $M$ is maximal, $M M^{-1} = M$ or $M M^{-1} = \fz$. As $M$ is proper, Lemma~\ref{lem:inverse of proper ideals strictly contain the ring} ensures that $M^{-1}$ strictly contains $\fz$, which implies, by Lemma~\ref{lem:multiplying ideal by element of Q(sqrt{-5})}, that $M M^{-1} \neq M$. So $M M^{-1} = \fz$. As a result, each maximal ideal of $\fz$ is invertible.
	
	Now suppose, by way of contradiction, that not every ideal in $\mathcal{I}^\bullet$ is invertible. Among all the nonzero non-invertible ideals take one, say $J$, maximal under inclusion (this is possible because $\fz$ satisfies the ACC). Because $\fz$ is an invertible fractional ideal, $J \subsetneq \fz$. Let $M$ be a maximal ideal containing $J$. By Lemma~\ref{lem:inverse of proper ideals strictly contain the ring}, one has that $\fz \subsetneq M^{-1} \subseteq J^{-1}$. This, along with Lemma~\ref{lem:multiplying ideal by element of Q(sqrt{-5})}, yields $J \subsetneq J M^{-1} \subseteq J J^{-1} \subseteq \fz$. Thus, $J M^{-1}$ is an ideal of $\fz$ strictly containing $J$. The maximality of $J$ now implies that $J M^{-1}(J M^{-1})^{-1} = \fz$ and, therefore, $M^{-1}(J M^{-1})^{-1} \subseteq J^{-1}$. Then
	\[
		\fz = J M^{-1}(J M^{-1})^{-1} \subseteq J J^{-1} \subseteq \fz,
	\]
	which forces $J J^{-1} = \fz$, a contradiction.
	
	Finally, take $F \in \mathcal{F}^\bullet$. Then there exist an ideal $I \in \mathcal{I}^\bullet$ and $\alpha \in \fz^\bullet$ such that $F = \alpha^{-1} I$. So one obtains that
	\[
		(\alpha I^{-1}) F = (\alpha I^{-1})(\alpha^{-1} I) = I^{-1} I = \fz.
	\]
	As a consequence, the fractional ideal $\alpha I^{-1}$ is the inverse of $F$ in $\mathcal{F}^\bullet$. Because each nonzero fractional ideal of $\fz$ is invertible, $\mathcal{F}^\bullet$ is a group. Since the multiplication of fractional ideals is commutative, $\mathcal{F}^\bullet$ is abelian.
\end{proof}
\medskip

\begin{corollary} \label{cor:an ideal divides any principal ideal it contains}
	If $I \in \cI^\bullet$ and $\alpha \in I^\bullet$, then $IJ = \langle \alpha \rangle$ for some $J \in \cI^\bullet$.
\end{corollary}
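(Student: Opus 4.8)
The plan is to exploit the group structure on $\mathcal{F}^\bullet$ just established in Theorem~\ref{thm:the set of fractional ideals forms a group}. Since $I \in \cI^\bullet$ is in particular a nonzero fractional ideal, it is invertible in $\mathcal{F}^\bullet$, and its inverse is precisely the fractional ideal $I^{-1}$ defined above; that is, $I I^{-1} = \fz$. The natural candidate for $J$ is therefore $J := \alpha I^{-1}$, because then $IJ = \alpha (I I^{-1}) = \alpha \fz = \langle \alpha \rangle$ formally delivers what we want. The real work is to confirm that this $J$ is a legitimate member of $\cI^\bullet$, i.e.\ a nonzero \emph{ideal} of $\fz$ rather than merely a fractional ideal.

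First I would record that $\fz \subseteq I^{-1}$, which holds because $1 \cdot I = I \subseteq \fz$ shows $1 \in I^{-1}$. Consequently $\alpha = \alpha \cdot 1 \in \alpha I^{-1} = J$, and since $\alpha \neq 0$ this guarantees $J \neq \langle 0 \rangle$. Moreover, because $I^{-1}$ is a fractional ideal (Exercise~\ref{ex:inverse ideals are fractional}) and $\alpha \in \fz^\bullet$, the product $\alpha I^{-1}$ is again a fractional ideal.

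The main point — and the only place where the hypothesis $\alpha \in I$ is used — is to show that $J = \alpha I^{-1}$ is actually contained in $\fz$, since a fractional ideal contained in the ring is an ideal of the ring. To see this, take an arbitrary element $\alpha x \in J$ with $x \in I^{-1}$. By the definition of $I^{-1}$ we have $x I \subseteq \fz$; since $\alpha \in I$, this yields $x \alpha \in \fz$, that is, $\alpha x \in \fz$. Hence $J \subseteq \fz$, so $J$ is a nonzero ideal of $\fz$, and the computation $IJ = \alpha I I^{-1} = \alpha \fz = \langle \alpha \rangle$ completes the argument.

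I expect the only genuine obstacle to be this containment $\alpha I^{-1} \subseteq \fz$: without it, $J$ would be merely a fractional ideal, and the assertion $IJ = \langle \alpha \rangle$ would not express divisibility within $\cI^\bullet$. Everything else is read directly off the group law furnished by Theorem~\ref{thm:the set of fractional ideals forms a group}.
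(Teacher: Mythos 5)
Your proof is correct and follows essentially the same route as the paper: both take $J$ to be the inverse in $\mathcal{F}^\bullet$ of the fractional ideal $\alpha^{-1}I$ furnished by Theorem~\ref{thm:the set of fractional ideals forms a group} (which is exactly your $\alpha I^{-1}$), so that $IJ = \langle \alpha \rangle$, and then verify that this fractional ideal actually lies in $\fz$. The only difference is in that last verification: where you deduce $\alpha I^{-1} \subseteq \fz$ directly from the definition of $I^{-1}$ together with $\alpha \in I$, the paper observes that $\beta I \subseteq JI = \langle \alpha \rangle \subseteq I$ for every $\beta \in J$ and invokes Lemma~\ref{lem:multiplying ideal by element of Q(sqrt{-5})} --- your version is, if anything, slightly more direct.
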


\begin{proof}
	Let $I$ and $\alpha$ be as in the statement of the corollary. As $\alpha^{-1} I$ is a nonzero fractional ideal, there exists a nonzero fractional ideal $J$ such that $\alpha^{-1}I J = \fz$, that is $I J = \langle \alpha \rangle$. Since $\beta I \subseteq JI = \langle \alpha \rangle \subseteq I$ for all $\beta \in J$, Lemma~\ref{lem:multiplying ideal by element of Q(sqrt{-5})} guarantees that $J \subseteq \fz$. Hence, $J$ is a nonzero ideal of $\fz$.
\end{proof}
\medskip

\begin{theorem}\label{FTIT} [The Fundamental Theorem of Ideal Theory]
	Let $I$ be a nonzero proper ideal of $\fz$. There exists a unique (up to order)
	list of prime ideals $P_1, \dots ,P_k$ of $\fz$ such that $I = P_1 \cdots P_k$.
\end{theorem}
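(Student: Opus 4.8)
The plan is to prove existence and uniqueness separately, leaning on the group structure of $\mathcal{F}^\bullet$ established in Theorem~\ref{thm:the set of fractional ideals forms a group} and on the fact that in $\fz$ nonzero prime and maximal ideals coincide (Proposition~\ref{prop:prime ideals are maximal}).

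For existence I would argue by a maximal-counterexample argument, which is justified by the ACC (Corollary~\ref{cor:two-elements generating ideals}). Suppose some nonzero proper ideal fails to factor as a product of primes; among all such ideals choose one, say $I$, that is maximal under inclusion. Since $I$ is proper, it lies inside a maximal ideal $M$, which is prime by Exercise~\ref{ex:maximal ideals are prime}. The key move is to pass to $IM^{-1}$: from $I \subseteq M$ and $\fz \subseteq M^{-1}$ one gets $I \subseteq IM^{-1} \subseteq MM^{-1} = \fz$, so $IM^{-1}$ is an ideal of $\fz$. I would then show the containment $I \subseteq IM^{-1}$ is strict: if $IM^{-1} = I$, then every $\alpha \in M^{-1}$ satisfies $\alpha I \subseteq I$, whence $\alpha \in \fz$ by Lemma~\ref{lem:multiplying ideal by element of Q(sqrt{-5})}, forcing $M^{-1} \subseteq \fz$ and contradicting Lemma~\ref{lem:inverse of proper ideals strictly contain the ring}. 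Thus $IM^{-1}$ strictly contains $I$. If $IM^{-1} = \fz$, then $I = M$ is prime, contradicting the choice of $I$; otherwise $IM^{-1}$ is a proper ideal strictly larger than $I$, so by maximality it factors as $IM^{-1} = P_1 \cdots P_r$. Multiplying by $M$ and using $MM^{-1} = \fz$ gives $I = M P_1 \cdots P_r$, a prime factorization of $I$ --- the desired contradiction.

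For uniqueness, suppose $I = P_1 \cdots P_k = Q_1 \cdots Q_l$ with all $P_i, Q_j$ prime. Since $Q_1 \cdots Q_l = I \subseteq P_1$ and $P_1$ is prime, Exercise~\ref{ex:product of ideals inside a prime ideal} yields some $Q_j \subseteq P_1$; relabel so that $Q_1 \subseteq P_1$. Because $Q_1$ is maximal (Proposition~\ref{prop:prime ideals are maximal}) and $P_1 \neq \fz$, we must have $Q_1 = P_1$. Now I would invoke the group $\mathcal{F}^\bullet$: multiplying both factorizations by $P_1^{-1}$ cancels the common factor, leaving $P_2 \cdots P_k = Q_2 \cdots Q_l$. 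Iterating this (formally, induction on $k$) shows $k = l$ and that the two lists agree up to order.

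I expect the existence step to be the main obstacle, specifically the verification that $IM^{-1}$ strictly contains $I$; this is where the two structural lemmas (Lemma~\ref{lem:multiplying ideal by element of Q(sqrt{-5})} and Lemma~\ref{lem:inverse of proper ideals strictly contain the ring}) do the real work. By contrast, uniqueness is essentially formal once cancellation in the group $\mathcal{F}^\bullet$ is available.
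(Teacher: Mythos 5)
Your proposal is correct and follows essentially the same route as the paper: existence via a maximal counterexample under the ACC, passing to $IM^{-1}$ and using Lemma~\ref{lem:inverse of proper ideals strictly contain the ring} together with Lemma~\ref{lem:multiplying ideal by element of Q(sqrt{-5})} to get strict containment; and uniqueness by matching a prime factor through Exercise~\ref{ex:product of ideals inside a prime ideal} and Proposition~\ref{prop:prime ideals are maximal}, then cancelling in the group $\mathcal{F}^\bullet$. The only cosmetic difference is that the paper phrases uniqueness as a minimal-counterexample argument on the number of factors rather than an explicit induction, which is the same idea.
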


\begin{proof}
	Suppose, by way of contradiction, that not every ideal in $\mathcal{I}^\bullet$ can be written as the product of prime ideals. From the set of ideals of $\fz$ which are not the product of primes ideals, take one, say $I$, maximal under inclusion. Clearly, $I$ is not prime. Therefore $I$ is contained in a maximal ideal $P_1$, and such containment must be strict by Exercise~\ref{ex:maximal ideals are prime}. By Lemma~\ref{lem:inverse of proper ideals strictly contain the ring}, one has that $\fz \subsetneq P_1^{-1}$ and so $I \subseteq I P_1^{-1}$. Now Lemma~\ref{lem:multiplying ideal by element of Q(sqrt{-5})} ensures that the latter inclusion is strict. The maximality of $I$ now implies that $I P_1^{-1} = P_2 \cdots P_k$ for some prime ideals $P_2, \dots, P_k$. This, along with Theorem~\ref{thm:the set of fractional ideals forms a group}, ensures that $I = P_1 \cdots P_k$, a contradiction.
	
	To argue uniqueness, let us assume, by contradiction, that there exists an ideal having two distinct prime factorizations. Let $m$ be the minimum natural number such that there exists $I \in \cI$ with two distinct factorizations into prime ideals, one of them containing $m$ factors. Suppose that
	\begin{align} \label{eq:distinct prime factorizations}
		I = P_1 \cdots P_m = Q_1 \cdots Q_n.
	\end{align}
	Because $Q_1 \cdots Q_n \subseteq P_m$, there exists $Q \in \{Q_1, \dots, Q_n\}$ such that $Q \subseteq P_m$ (Exercise~\ref{ex:product of ideals inside a prime ideal}). By Proposition~\ref{prop:prime ideals are maximal}, both $Q$ and $P_m$ are maximal ideals, which implies that $P_m = Q$. As $IQ^{-1} \subseteq I I^{-1} \subseteq \fz$, it follows that $I Q^{-1} \in \mathcal{I}$. Multiplying the equality (\ref{eq:distinct prime factorizations}) by the fractional ideal $Q^{-1}$, we obtain that $I Q^{-1}$ is an ideal of $\fz$ with two distinct factorizations into prime ideals such that one of them, namely $P_1 \cdots P_{m-1}$, contains less than $m$ factors. As this contradicts the minimality of $m$, uniqueness follows.
\end{proof}
\medskip

An element $a$ of a commutative monoid $M$ is said to be an \emph{atom} if for all $x,y \in M$ such that $a = xy$, either $x$ is a unit or $y$ is a unit (i.e., has an inverse). A commutative cancellative monoid is called \emph{atomic} if every nonzero nonunit element can be factored into atoms.

\begin{corollary}
	The monoid $\cI^\bullet$ is atomic.
\end{corollary}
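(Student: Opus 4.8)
The plan is to deduce atomicity directly from the Fundamental Theorem of Ideal Theory (Theorem~\ref{FTIT}), after pinning down the units and atoms of $\cI^\bullet$. First I would observe that the only unit of $\cI^\bullet$ is the identity $\langle 1 \rangle = \fz$: indeed, if $IJ = \langle 1 \rangle$ for $I, J \in \cI^\bullet$, then $\langle 1 \rangle = IJ \subseteq I \subseteq \langle 1 \rangle$ forces $I = \langle 1 \rangle$, and likewise $J = \langle 1 \rangle$. Consequently the nonunit elements of $\cI^\bullet$ are precisely the nonzero proper ideals of $\fz$.

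Next I would identify the atoms of $\cI^\bullet$ as exactly the nonzero prime ideals. To see that each nonzero prime ideal $P$ is an atom, suppose $P = IJ$ with $I, J \in \cI^\bullet$ and neither equal to $\langle 1 \rangle$; then $I$ and $J$ are nonzero proper ideals, so by Theorem~\ref{FTIT} each factors as a nonempty product of prime ideals, and concatenating these factorizations exhibits $P$ as a product of at least two prime ideals. This contradicts the uniqueness clause of Theorem~\ref{FTIT}, since $P$ itself is a prime factorization of $P$ of length one. Hence $P$ is an atom. (One need not verify the converse for the argument, although it follows just as readily from Theorem~\ref{FTIT}.)

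Finally, given any nonunit $I \in \cI^\bullet$, i.e.\ any nonzero proper ideal, Theorem~\ref{FTIT} writes $I = P_1 \cdots P_k$ with each $P_i$ a nonzero prime ideal, and by the previous paragraph each $P_i$ is an atom. Thus every nonunit of $\cI^\bullet$ factors into atoms, which is exactly the assertion that $\cI^\bullet$ is atomic.

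Because the substantive content — the existence and uniqueness of prime factorizations — is already packaged in Theorem~\ref{FTIT}, I expect no genuine obstacle here. The only point requiring care is the verification that a prime ideal admits no nontrivial factorization in $\cI^\bullet$, and this is precisely where the uniqueness half of Theorem~\ref{FTIT} is invoked; the existence half then immediately supplies the atomic factorizations.
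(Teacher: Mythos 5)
Your proof is correct and follows exactly the route the paper intends: the corollary is stated as an immediate consequence of Theorem~\ref{FTIT} (existence gives the factorization into prime ideals, uniqueness gives that prime ideals are atoms), and your write-up simply supplies the details the paper leaves implicit. No gaps.
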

\medskip

\section{The Class Group} \label{sec:the class group}

To understand the phenomenon of non-unique factorization in $\fz$, we first need to understand certain classes of ideals of $\fz$. Let
\[
	\mathcal{P} := \{ I \in \mathcal{I} \mid I \mbox{ is a principal ideal of } \fz\}.
\]
Two ideals $I,J \in \cI$ are \emph{equivalent} if $\langle \alpha \rangle I = \langle \beta \rangle J$ for some $\alpha, \beta \in \fz^\bullet$. In this case, we write $I \sim J$. It is clear that $\sim$ defines an equivalence relation on $\fz$. The equivalence classes of $\sim$ are called \emph{ideal classes}. Let $I\mathcal{P}$ denote the ideal class of $I$, and we also let $\mathcal{C}(\fz)$ denote the set of all nonzero ideal classes. Now define a binary operation $\ast$ on $\mathcal{C}(\fz)$ by
\[
	I \mathcal{P} \ast J\mathcal{P} = (IJ)\mathcal{P}.
\]
It turns out that $\mathcal{C}(\fz)$ is, indeed, a group under the $\ast$ operation.

\begin{theorem}
	The set of ideal classes $\mathcal{C}(\fz)$ is an abelian group under $\ast$.
\end{theorem}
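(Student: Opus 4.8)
The plan is to verify the group axioms one at a time, noting that commutativity and associativity come essentially for free from the monoid structure on ideals (Exercise~\ref{ex:properties of ideal multiplication}), while the existence of inverses is the only substantive point and will rest on Theorem~\ref{inverse}. Before any axiom can be checked, however, I would first confirm that $\ast$ is well defined, i.e., independent of the chosen representatives.

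For well-definedness, suppose $I \sim I'$ and $J \sim J'$, so that $\langle \alpha \rangle I = \langle \alpha' \rangle I'$ and $\langle \beta \rangle J = \langle \beta' \rangle J'$ for suitable $\alpha, \alpha', \beta, \beta' \in \fz^\bullet$. Multiplying these two equalities and invoking commutativity of ideal multiplication yields $\langle \alpha \beta \rangle IJ = \langle \alpha' \beta' \rangle I'J'$, where $\alpha\beta$ and $\alpha'\beta'$ are nonzero because $\fz$ is a domain. Hence $IJ \sim I'J'$, so $(IJ)\mathcal{P} = (I'J')\mathcal{P}$ and $\ast$ descends to a genuine operation on $\mathcal{C}(\fz)$. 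Associativity and commutativity then follow immediately from the corresponding properties of ideal multiplication: for instance $(I\mathcal{P} \ast J\mathcal{P}) \ast K\mathcal{P} = \big((IJ)K\big)\mathcal{P} = \big(I(JK)\big)\mathcal{P} = I\mathcal{P} \ast (J\mathcal{P} \ast K\mathcal{P})$, and likewise $(IJ)\mathcal{P} = (JI)\mathcal{P}$.

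For the identity element, I would observe that all nonzero principal ideals lie in a single class—indeed $\langle a \rangle \sim \langle b \rangle$ since $\langle b \rangle \langle a \rangle = \langle a \rangle \langle b \rangle$—and that this class is exactly $\mathcal{P}$. Because $\fz = \langle 1 \rangle$ satisfies $\fz I = I$ for every ideal $I$, we obtain $\mathcal{P} \ast I\mathcal{P} = (\fz I)\mathcal{P} = I\mathcal{P}$, so $\mathcal{P}$ serves as the identity of $\mathcal{C}(\fz)$.

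The hard part—and the only genuinely nontrivial step—is the existence of inverses, where Theorem~\ref{inverse} does all the work. Given $I\mathcal{P} \in \mathcal{C}(\fz)$ with $I \neq \langle 0 \rangle$, Theorem~\ref{inverse} produces a nonzero ideal $J$ of $\fz$ such that $IJ$ is principal, say $IJ = \langle \gamma \rangle$ with $\gamma \neq 0$. Then $J\mathcal{P} \in \mathcal{C}(\fz)$ and $I\mathcal{P} \ast J\mathcal{P} = (IJ)\mathcal{P} = \langle \gamma \rangle \mathcal{P} = \mathcal{P}$, so $J\mathcal{P}$ is an inverse of $I\mathcal{P}$. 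Assembling well-definedness, associativity, commutativity, the identity $\mathcal{P}$, and inverses shows that $\mathcal{C}(\fz)$ is an abelian group under $\ast$, as claimed.
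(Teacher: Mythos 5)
Your proof is correct and follows essentially the same route as the paper: associativity, commutativity, and the identity come from the monoid structure of ideal multiplication, and inverses come from Theorem~\ref{inverse} together with the fact that all nonzero principal ideals lie in the single class $\mathcal{P}$. The only difference is that you also verify that $\ast$ is well defined on classes, a point the paper leaves implicit; this is a welcome addition and your argument for it is correct.
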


\begin{proof}
	Because the product of ideals is associative and commutative, so is $\ast$. Also, it follows immediately that $\langle 1 \rangle \mathcal P \ast I \mathcal P =  (\langle 1 \rangle I) \mathcal P = I \mathcal P$ for each $I \in \cI^\bullet$, which means that $\mathcal P = \langle 1 \rangle \mathcal{P}$ is the identity element of $\mathcal{C}(\fz)$. In addition, as any two nonzero principal ideals are in the same ideal class, Theorem~\ref{inverse} guarantees that, for any $I \mathcal P \in \mathcal C (\fz)$, there exists $J \in \cI^\bullet$ such that $I\mathcal{P} \ast J\mathcal{P} = IJ \in \mathcal P = \langle 1 \rangle \mathcal{P}$. So $J \mathcal P$ is the inverse of $I \mathcal P$ in $\mathcal{C}(\fz)$. Hence, $\mathcal C (\fz)$ is an abelian group.
\end{proof}
\medskip

\begin{definition}
	The group $\mathcal{C}(\fz)$ is called the \emph{class group} of $\fz$, and the order of $\mathcal{C}(\fz)$ is called the \emph{class number} of $\fz$.
\end{definition}
\medskip

Recall that if $\theta \colon R \to S$ is a ring homomorphism, then $\ker \theta = \{r \in R \mid \theta(r) = 0\}$ is an ideal of $R$. Moreover, the First Isomorphism Theorem for rings states that $R/\ker \theta \cong \theta(R)$.

\begin{definition}\label{idealnorm}
	Let $K$ be an algebraic number field. For any nonzero ideal $I$ of $\mathcal{O}_K$, the cardinality $|\mathcal{O}_K/I|$ is called the \emph{norm} of $I$ and is denoted by $N(I)$.
\end{definition}
\medskip


\begin{proposition} \label{prop:there are only finitely many residue classes}
	Let $d \in \z \setminus \{0,1\}$ be a squarefree integer with $d \equiv 2,3 \pmod{4}$. Then $N(I)$ is finite for all nonzero ideals $I$ of $\z[\sqrt{d}]$.
\end{proposition}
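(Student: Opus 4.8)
The plan is to reduce the finiteness of $\z[\sqrt{d}]/I$ to the finiteness of a quotient by a principal integer ideal, essentially repackaging the residue-counting argument already used in the proof of Proposition~\ref{prop:prime ideals are maximal}. First I would pick any nonzero $\beta \in I$ and consider the integer $n := |N(\beta)| = |\beta\,\sigma(\beta)|$, where $\sigma$ denotes the conjugation $\sigma(x + y\sqrt{d}) = x - y\sqrt{d}$ from Exercise~\ref{ex:trace and discriminant}. Since $\sigma(\beta) \in \z[\sqrt{d}]$, the product $\beta\,\sigma(\beta)$ lies in the principal ideal $\langle \beta \rangle \subseteq I$; as it is a nonzero rational integer, its absolute value $n$ is a positive integer belonging to $I$. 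Consequently $\langle n \rangle \subseteq I$, so the canonical projection yields a ring surjection $\z[\sqrt{d}]/\langle n \rangle \twoheadrightarrow \z[\sqrt{d}]/I$, and it suffices to prove that $\z[\sqrt{d}]/\langle n \rangle$ is finite.

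The second step is the counting. By Example~\ref{ex:canonical integral basis}, $\{1, \sqrt{d}\}$ is an integral basis for $\z[\sqrt{d}]$, so every $x \in \z[\sqrt{d}]$ can be written uniquely as $x = z_1 + z_2 \sqrt{d}$ with $z_1, z_2 \in \z$. Reducing each coefficient modulo $n$, I would choose $n_1, n_2 \in \{0, 1, \dots, n-1\}$ with $z_i \equiv n_i \pmod{n}$; then $x - (n_1 + n_2\sqrt{d}) \in n\z[\sqrt{d}] = \langle n \rangle$, whence $x + \langle n \rangle = (n_1 + n_2 \sqrt{d}) + \langle n \rangle$. Thus every coset of $\langle n \rangle$ is represented by one of the finitely many elements $n_1 + n_2\sqrt{d}$ with $0 \le n_1, n_2 < n$, giving $|\z[\sqrt{d}]/\langle n \rangle| \le n^2 < \infty$. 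Pushing this bound through the surjection produces $N(I) = |\z[\sqrt{d}]/I| \le n^2$, which is finite, as desired.

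I do not anticipate a serious obstacle; the argument is little more than the uniqueness of integral-basis representations combined with the finiteness of $\z/n\z$. The only points requiring a little care are (i) passing to $|N(\beta)|$ rather than $N(\beta)$ itself, since for $d > 0$ the norm $a^2 - d b^2$ may be negative, and (ii) using $\sigma(\beta) \in \z[\sqrt{d}]$ to guarantee that $n$ genuinely lies in $I$, and not merely in $\z$. I also note that the argument treats all nonzero ideals uniformly, including $I = \z[\sqrt{d}]$ itself, and that it relies on the integral basis of the whole ring rather than on an integral basis for $I$.
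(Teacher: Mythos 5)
Your proof is correct and follows essentially the same route as the paper: take a nonzero $\beta \in I$, observe that $n = |\beta\,\sigma(\beta)|$ is a positive integer in $I$, and count the at most $n^2$ residues of $\z[\sqrt{d}]/\langle n \rangle$ via the integral basis $\{1,\sqrt{d}\}$. Your remark about taking the absolute value of the norm when $d>0$ is a point of care the paper's own proof glosses over.
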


\begin{proof}
	Take $n = \alpha \bar{\alpha}$ for any nonzero $\alpha \in I$. Then $n \in I \cap \n$. As $\langle n \rangle \subseteq I$, it follows that $|\z[\sqrt{d}]/I| \le |\z[\sqrt{d}]/\langle n \rangle|$. In addition, each element of $\z[\sqrt{d}]/\langle n \rangle$ has a representative $n_1 + n_2\sqrt{d}$ with $n_1, n_2 \in \{0,1,\dots, n-1\}$. Hence, $\z[\sqrt{d}]/\langle n \rangle$ is finite and, therefore, $N(I) = |\z[\sqrt{d}]/I| < \infty$.
\end{proof}
\medskip

As ideal norms generalize the notion of standard norms given in (\ref{eq:standard norm}), we expect they satisfy some similar properties. Indeed, this is the case.

\begin{exercise} \label{ex:cardinality of quotients}
	Let $I$ and $P$ be a nonzero ideal and a nonzero prime ideal of $\fz$, respectively. Show that $|\fz/P| = |I/IP|$.
\end{exercise}
\medskip

\begin{proposition} \label{prop:norm is multiplicative}
	$N(IJ) = N(I)N(J)$ for all $I,J \in \cI^\bullet$.
\end{proposition}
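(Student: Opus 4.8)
The plan is to reduce the general statement to the special case in which one of the factors is a prime ideal, and then to handle the prime case by a counting argument on a short chain of additive groups. First I would dispose of the trivial situations: if either $I$ or $J$ equals the unit ideal $\langle 1 \rangle = \fz$, then $N(\langle 1 \rangle) = |\fz/\fz| = 1$ and the identity $N(IJ) = N(I)N(J)$ holds immediately. Hence I may assume that both $I$ and $J$ are nonzero proper ideals, so that Theorem~\ref{FTIT} applies.

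The crux is the prime case: for any $I \in \cI^\bullet$ and any nonzero prime ideal $P$, I claim that $N(IP) = N(I)N(P)$. To see this, consider the chain of ideals $IP \subseteq I \subseteq \fz$, regarded as additive subgroups of $\fz$. By Proposition~\ref{prop:there are only finitely many residue classes} all the relevant quotients are finite, so the third isomorphism theorem (applied to the abelian group $\fz/IP$ with subgroup $I/IP$) yields
\[
	|\fz/IP| = |\fz/I| \cdot |I/IP|.
\]
Now Exercise~\ref{ex:cardinality of quotients} gives $|I/IP| = |\fz/P| = N(P)$, while by Definition~\ref{idealnorm} the outer two terms are $N(IP)$ and $N(I)$, respectively. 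Substituting, I obtain $N(IP) = N(I)N(P)$, as claimed.

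With the prime case in hand, I would finish by induction on the number of prime factors of $J$. Using Theorem~\ref{FTIT}, write $J = P_1 \cdots P_k$ as a product of prime ideals. Applying the prime case repeatedly gives
\[
	N(IJ) = N(IP_1 \cdots P_k) = N(IP_1 \cdots P_{k-1})N(P_k) = \cdots = N(I)N(P_1)\cdots N(P_k).
\]
Running the same chain of equalities with $I$ replaced by $\fz$ (and using $N(\fz) = 1$, $\fz J = J$) shows $N(J) = N(P_1)\cdots N(P_k)$, and combining the two displays yields $N(IJ) = N(I)N(J)$.

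I expect the main obstacle to be the prime case, and within it the justification of the index identity $|\fz/IP| = |\fz/I|\cdot|I/IP|$: one must verify that every group in sight is finite before invoking multiplicativity of indices, which is exactly what Proposition~\ref{prop:there are only finitely many residue classes} guarantees, and one must correctly identify $|I/IP|$ with $N(P)$ through Exercise~\ref{ex:cardinality of quotients} rather than naively expecting the equality $|I/IP| = |\fz/P|$ to be self-evident. Once these two points are settled, the inductive step reducing the general product to the prime case is purely formal.
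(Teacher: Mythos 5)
Your proposal is correct and follows essentially the same route as the paper: both reduce to the case of a prime factor by induction via Theorem~\ref{FTIT}, then derive the index identity $|\fz/IP| = |\fz/I|\cdot|I/IP|$ (the paper via the First Isomorphism Theorem applied to the surjection $\fz/IP \to \fz/I$, you via the equivalent third-isomorphism/index-multiplicativity statement) and conclude with Exercise~\ref{ex:cardinality of quotients}. Your explicit attention to finiteness and to the non-obviousness of $|I/IP| = |\fz/P|$ matches the paper's use of Proposition~\ref{prop:there are only finitely many residue classes} and the cited exercise.
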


\begin{proof}
	By factoring $J$ as the product of prime ideals (Theorem~\ref{FTIT}) and applying induction on the number of factors, we can assume that $J$ is a prime ideal. Consider the ring homomorphism $\theta \colon \fz/IJ \to \fz/I$ defined by $\theta(\alpha + IJ) = \alpha + I$. It follows immediately that $\theta$ is surjective and $\ker \theta = \{\alpha + IJ \mid \alpha \in I\}$. Therefore
	\[
		\frac{\fz/IJ}{I/IJ} \cong \fz/I
	\]
	by the First Isomorphism Theorem. As $IJ$ is nonzero, $|\fz/IJ| = N(IJ)$ is finite and so $|\fz/IJ| = |\fz/I| \cdot |I/IJ|$. Since $J$ is prime, we can use Exercise~\ref{ex:cardinality of quotients} to conclude that
	\begin{align*}
		N(IJ) &= |\fz/IJ| = |\fz/I| \cdot |I/IJ|\\
				&= |\fz/I| \cdot |\fz/J| = N(I)N(J).
	\end{align*}
\end{proof}
\medskip

\begin{corollary} \label{cor:prime norm implies prime ideal}
	If $N(I)$ is prime for some $I \in \cI^\bullet$, then $I$ is a prime ideal.
\end{corollary}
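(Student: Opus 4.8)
The plan is to combine the unique factorization of $I$ into prime ideals (Theorem~\ref{FTIT}) with the multiplicativity of the ideal norm (Proposition~\ref{prop:norm is multiplicative}). First I would observe that, since $N(I) = |\fz/I|$ is prime, it is in particular larger than $1$, whence $I \neq \fz$; as $I \in \cI^\bullet$ is also nonzero, it follows that $I$ is a nonzero proper ideal of $\fz$, so that Theorem~\ref{FTIT} applies.

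Next I would apply Theorem~\ref{FTIT} to write $I = P_1 \cdots P_k$ for some prime ideals $P_1, \dots, P_k$ of $\fz$. Taking norms and using Proposition~\ref{prop:norm is multiplicative} repeatedly gives
\[
	N(I) = N(P_1) \cdots N(P_k).
\]
The key point is then that each factor $N(P_i)$ is at least $2$: indeed, each $P_i$ is a nonzero proper ideal, so $\fz/P_i$ contains at least the two distinct cosets $0 + P_i$ and $1 + P_i$, giving $N(P_i) = |\fz/P_i| \ge 2$. (In fact $\fz/P_i$ is a field by Proposition~\ref{prop:prime ideals are maximal}, but we only need that it is nontrivial.)

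Since $N(I)$ is a prime integer written as a product of integers each at least $2$, the product must consist of a single factor; that is, $k = 1$. Therefore $I = P_1$ is a prime ideal, as desired.

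The main obstacle---which is in fact mild---is justifying that none of the prime factors can have norm $1$. This is exactly the statement that a proper ideal has index greater than $1$ in $\fz$, and it is immediate from $1 \notin P_i$. Once this is in hand, primality of $N(I)$ forces a single-factor decomposition and the conclusion follows with no further computation.
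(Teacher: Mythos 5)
Your proof is correct and is exactly the argument the paper intends: the corollary is stated without proof immediately after Proposition~\ref{prop:norm is multiplicative}, and the implicit justification is precisely your combination of Theorem~\ref{FTIT} with multiplicativity of the norm, together with the observation that every nonzero proper ideal has norm at least $2$. Nothing further is needed.
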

\medskip

Let us verify now that the ideal norm is consistent with the standard norm on principal ideals.

\begin{proposition} \label{prop:norm ideal of a principal ideal}
	$N(\langle \alpha \rangle) = N(\alpha)$ for all $\alpha \in \fz^\bullet$.
\end{proposition}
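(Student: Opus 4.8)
The plan is to reduce the statement to the multiplicativity of the ideal norm (Proposition~\ref{prop:norm is multiplicative}) together with the conjugation automorphism of $\fz$, rather than to count residues of $\fz/\langle\alpha\rangle$ directly. Write $\alpha = a + b\sqrt{-5}$ and set $n = N(\alpha) = a^2 + 5b^2$, which is a positive integer because $\alpha \neq 0$. The central observation is that the product of two principal ideals is the principal ideal generated by the product of their generators, so
\[
	\langle\alpha\rangle\langle\overline{\alpha}\rangle = \langle\alpha\overline{\alpha}\rangle = \langle n\rangle,
\]
since $\alpha\overline{\alpha} = N(\alpha) = n$. This is what lets me trade the single ideal $\langle\alpha\rangle$ for the rational integer ideal $\langle n\rangle$, whose norm is directly computable.

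First I would compute $N(\langle n\rangle)$. Arguing exactly as in the proof of Proposition~\ref{prop:there are only finitely many residue classes}, every class of $\fz/\langle n\rangle$ has a representative $n_1 + n_2\sqrt{-5}$ with $n_1, n_2 \in \{0, \dots, n-1\}$. These $n^2$ representatives are pairwise incongruent: if their difference $(n_1 - m_1) + (n_2 - m_2)\sqrt{-5}$ lies in $\langle n\rangle$, then matching coefficients forces $n \mid n_1 - m_1$ and $n \mid n_2 - m_2$, and since each difference has absolute value less than $n$, the representatives coincide. Hence $N(\langle n\rangle) = n^2 = N(\alpha)^2$.

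Next I would establish that $N(\langle\alpha\rangle) = N(\langle\overline{\alpha}\rangle)$. The conjugation map $\sigma(x + y\sqrt{-5}) = x - y\sqrt{-5}$ is a ring automorphism of $\fz$ carrying $\langle\alpha\rangle$ onto $\langle\overline{\alpha}\rangle$, so it descends to a ring isomorphism $\fz/\langle\alpha\rangle \cong \fz/\langle\overline{\alpha}\rangle$; both quotients are finite by Proposition~\ref{prop:there are only finitely many residue classes}, so they have equal cardinality. Combining everything with Proposition~\ref{prop:norm is multiplicative},
\[
	N(\langle\alpha\rangle)^2 = N(\langle\alpha\rangle)\, N(\langle\overline{\alpha}\rangle) = N\big(\langle\alpha\rangle\langle\overline{\alpha}\rangle\big) = N(\langle n\rangle) = N(\alpha)^2.
\]
Since $N(\langle\alpha\rangle)$ and $N(\alpha)$ are both positive, taking square roots gives $N(\langle\alpha\rangle) = N(\alpha)$, as claimed.

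The only step demanding genuine care is verifying that the $n^2$ chosen representatives of $\fz/\langle n\rangle$ are actually distinct modulo $\langle n\rangle$; the rest is a direct assembly of results already in hand. (One could instead argue lattice-theoretically, viewing $\langle\alpha\rangle$ as the sublattice of $\fz \cong \z^2$ spanned by the coordinate vectors of $\alpha$ and $\alpha\sqrt{-5}$ and reading off the index as the absolute value of the determinant $a^2 + 5b^2$, but that route requires the Smith normal form, which the paper has not developed, so I would favor the multiplicative argument above.)
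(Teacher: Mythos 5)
Your proof is correct and follows essentially the same route as the paper: compute $N(\langle n \rangle) = n^2$ by counting representatives, show $N(\langle \alpha \rangle) = N(\langle \overline{\alpha} \rangle)$ via conjugation (the paper phrases this as a surjective homomorphism $x \mapsto \overline{x} + \langle \overline{\alpha} \rangle$ with kernel $\langle \alpha \rangle$, which is the same isomorphism you obtain from the automorphism $\sigma$), and then apply Proposition~\ref{prop:norm is multiplicative} and take square roots. No substantive differences.
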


\begin{proof}
	Set $S = \big\{a + b\sqrt{-5} \mid a,b \in \{0,1, \dots, n-1\} \big\}$. Clearly, $|S| = n^2$. In addition,
	\[
		\fz/ \langle n \rangle = \{s + \langle n \rangle \mid s \in S\}.
	\]
	Note that if $s + \langle n \rangle = s' + \langle n \rangle$ for $s,s' \in S$, then we have $s = s'$. As a consequence, $N(\langle n \rangle) = n^2 = N(n)$ for each $n \in \n$. It can also be readily verified that the map $\theta \colon \fz \to \fz/ \langle \bar{\alpha} \rangle$ defined by $\theta(x) = \bar{x} + \langle \bar{\alpha} \rangle$ is a surjective ring homomorphism with $\ker \theta = \langle \alpha \rangle$. Therefore the rings $\fz/ \langle \alpha \rangle$ and $\fz/ \langle \bar{\alpha} \rangle$ are isomorphic by the First Isomorphism Theorem. This implies that $N(\langle \alpha \rangle) = N(\langle \bar{\alpha} \rangle)$. Because $\alpha \bar{\alpha} \in \n$, using Proposition~\ref{prop:norm is multiplicative}, one obtains
	\[
		N(\langle \alpha \rangle) = \sqrt{N(\langle \alpha \rangle)N(\langle\bar{\alpha} \rangle)} = \sqrt{N(\langle \alpha \bar{\alpha} \rangle)} = \alpha \bar{\alpha} = N(\alpha).
	\]
\end{proof}
\medskip

\begin{lemma} \label{lem:prime ideal as a factor of principal integer ideal}
	If $P$ is a nonzero prime ideal in $\fz$, then $P$ divides exactly one ideal $\langle p \rangle$, where $p$ is a prime number.
\end{lemma}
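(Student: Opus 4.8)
The plan is to first record the standard principle that, in $\cI^\bullet$, containing an ideal is the same as dividing it, at least for the principal ideals generated by rational primes. Concretely, if $\langle p \rangle \subseteq P$ then $p \in P^\bullet$, and Corollary~\ref{cor:an ideal divides any principal ideal it contains} produces an ideal $J \in \cI^\bullet$ with $PJ = \langle p \rangle$, so that $P$ divides $\langle p \rangle$; conversely, if $PJ = \langle p \rangle$ for some ideal $J$, then $\langle p \rangle = PJ \subseteq P$, and hence $p \in P$. Thus the assertion ``$P$ divides $\langle p \rangle$'' is equivalent to the membership ``$p \in P$'', and the entire lemma reduces to showing that $P$ contains exactly one rational prime.

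For existence, I would pick any $\alpha \in P^\bullet$ and consider the positive integer $n = N(\alpha) = \alpha \overline{\alpha}$, which lies in $P \cap \n$ because $\overline{\alpha} \in \fz$ and $P$ is an ideal. Since $P$ is proper, $\alpha$ cannot be a unit, so $n > 1$; factoring $n = p_1 \cdots p_r$ into rational primes and using that $P$ is a prime ideal (so that a product lying in $P$ forces one factor to lie in $P$, by iterating Exercise~\ref{ex:product of ideals inside a prime ideal}) yields some $p_i \in P$. Setting $p := p_i$, we obtain a prime number with $\langle p \rangle \subseteq P$, and the principle above shows $P$ divides $\langle p \rangle$.

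For uniqueness, I would argue by contradiction using B\'ezout. If $P$ divided two principal ideals $\langle p \rangle$ and $\langle q \rangle$ for distinct rational primes $p$ and $q$, then both $p$ and $q$ would belong to $P$; since $\gcd(p,q) = 1$, there are integers $s, t$ with $sp + tq = 1$, whence $1 \in P$ and $P = \fz$, contradicting that a prime ideal is proper. Therefore the rational prime divided by $P$ is unique, and since distinct positive primes generate distinct principal ideals, the ideal $\langle p \rangle$ is unique as well. The only step demanding a little care is the translation between divisibility and containment; once Corollary~\ref{cor:an ideal divides any principal ideal it contains} is invoked in both directions, the remaining arguments are elementary facts about rational primes together with the primeness and properness of $P$.
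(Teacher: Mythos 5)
Your proof is correct and follows essentially the same route as the paper's: take $\alpha \in P^\bullet$, put $n = \alpha\overline{\alpha} \in P \cap \n$, extract a rational prime $p \in P$ via the primality of $P$, convert containment to divisibility with Corollary~\ref{cor:an ideal divides any principal ideal it contains}, and settle uniqueness by B\'ezout. Your explicit observations that $n > 1$ (since $\alpha$ is a nonunit) and that divisibility of $\langle p \rangle$ by $P$ is equivalent to $p \in P$ are small points the paper leaves implicit, but they do not change the argument.
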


\begin{proof}
	For $\alpha \in P^\bullet$, it follows that $z = \alpha \bar{\alpha} \in P \cap \n$. Then, writing $z = p_1 \cdots p_k$ for some prime numbers $p_1, \dots, p_k$, we get $\langle z \rangle = \langle p_1 \rangle \cdots \langle p_k \rangle$. As $\langle p_1 \rangle \cdots \langle p_k \rangle \subseteq P$, we have that $\langle p_i \rangle \subseteq P$ for some $i \in \{1, \dots, k\}$ (Exercise~\ref{ex:product of ideals inside a prime ideal}). As $p_i \in P^\bullet$, Corollary~\ref{cor:an ideal divides any principal ideal it contains} ensures that $P$ divides $\langle p_i \rangle$. For the uniqueness, note that if $P$ divides $\langle p \rangle$ and $\langle p' \rangle$ for distinct primes $p$ and $p'$, then the fact that $mp + np' = 1$ for some $m,n \in \z$ would imply that $P$ divides the full ideal $\langle 1 \rangle = \fz$, a contradiction. \\
\end{proof}
\medskip

\begin{theorem}
	The class group of $\fz$ is $\z_2$.
\end{theorem}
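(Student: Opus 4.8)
The plan is to show that $\mathcal{C}(\fz)$ has exactly two elements by first producing a nontrivial element of order $2$ and then proving that no class other than the principal one and this element can occur. For nontriviality, introduce the shorthand $[I] := I\mathcal{P}$ for the ideal class of $I$. Recall from Example~\ref{ex:a nonprincipal ideal} that $P := \langle 2, 1 + \sqrt{-5}\rangle$ is not principal, so $[P]$ is not the identity of $\mathcal{C}(\fz)$. On the other hand, Example~\ref{ex:factorization of <2> into prime ideals} gives $P^2 = \langle 2 \rangle$, a principal ideal, so $[P]^2$ is the identity. Hence $[P]$ has order $2$, and $\z_2 \cong \langle [P]\rangle \subseteq \mathcal{C}(\fz)$.

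It remains to prove that $|\mathcal{C}(\fz)| \le 2$, and the crux of the whole argument is the following norm bound: \emph{every ideal class of $\fz$ contains an ideal of norm at most $2$}. To establish it, I would argue that for each nonzero ideal $I$ there is a nonzero $\alpha \in I$ with $N(\alpha) < 3\, N(I)$. Viewing $\fz$ as the lattice $\z \oplus \sqrt{5}\,\z \subseteq \C \cong \R^2$ under $a + b\sqrt{-5} \mapsto (a, b\sqrt{5})$, the standard norm $N(a+b\sqrt{-5}) = a^2 + 5b^2$ becomes the squared Euclidean length, the lattice $\fz$ has covolume $\sqrt{5}$, and a nonzero ideal $I$ is a sublattice of covolume $\sqrt{5}\, N(I)$. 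Applying Minkowski's convex body theorem to a disk centred at the origin of area exceeding $4\sqrt{5}\,N(I)$ produces a nonzero $\alpha \in I$ with $N(\alpha) \le \tfrac{4\sqrt{5}}{\pi} N(I) < 3\,N(I)$. This geometric lattice-point input is the main obstacle: everything else is bookkeeping with results already proved, whereas this step requires a genuinely new (if classical) ingredient, and making it fully self-contained at the elementary level of the paper is the delicate point.

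Granting the bound, the conclusion is quick. Given $\alpha \in I$ as above, Corollary~\ref{cor:an ideal divides any principal ideal it contains} lets me write $\langle \alpha \rangle = IJ$ for some $J \in \cI^\bullet$, and multiplicativity of the norm (Proposition~\ref{prop:norm is multiplicative}) together with Proposition~\ref{prop:norm ideal of a principal ideal} gives $N(J) = N(\alpha)/N(I) < 3$, so $N(J) \in \{1,2\}$. Since $IJ$ is principal, $[J] = [I]^{-1}$; as the inversion map is a bijection on classes, every ideal class is represented by an ideal of norm $1$ or $2$. An ideal of norm $1$ is $\langle 1 \rangle$, lying in the principal class. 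An ideal $I$ of norm $2$ is prime by Corollary~\ref{cor:prime norm implies prime ideal}; and $N(I) = 2$ forces $\fz/I \cong \z_2$, whence $2 \in I$ and $\langle 2 \rangle \subseteq I$. The unique factorization of the Fundamental Theorem of Ideal Theory (Theorem~\ref{FTIT}) applied to $\langle 2 \rangle = P^2$ then shows $I = P$. Therefore every ideal class equals $[\langle 1 \rangle]$ or $[P]$, so $|\mathcal{C}(\fz)| \le 2$. Combined with the first paragraph, $\mathcal{C}(\fz) \cong \z_2$, as claimed.
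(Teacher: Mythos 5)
Your proposal is correct, and it diverges from the paper's proof at exactly the one place where real work is needed. Both arguments share the same skeleton: show that every ideal class contains a representative of small norm, classify the ideals of that norm, and use the facts that $\langle 2, 1+\sqrt{-5}\rangle$ is nonprincipal while its square $\langle 2 \rangle$ is principal. The difference is the norm bound. The paper proves, by a pigeonhole argument on the box $\{\,a+b\sqrt{-5} \mid 0 \le a,b \le \lfloor \sqrt{N(I)}\rfloor\,\}$, that every nonzero ideal $I$ contains $\alpha \neq 0$ with $N(\alpha) \le 6\,N(I)$; this is completely elementary but forces a classification of all ideals of norm up to $6$, which in turn uses the explicit prime factorizations of $\langle 3 \rangle$, $\langle 5 \rangle$, and $\langle 1 \pm \sqrt{-5}\rangle$ from Example~\ref{ex:factorization of <2> into prime ideals} and Exercise~\ref{ex:factorization of <3> into prime ideals}. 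You instead invoke Minkowski's convex body theorem to get $N(\alpha) < 3\,N(I)$, after which only norms $1$ and $2$ survive and the endgame collapses to identifying the single ideal $\langle 2, 1+\sqrt{-5}\rangle$. Your constants are right: $\fz$ has covolume $\sqrt{5}$, a nonzero ideal has covolume $\sqrt{5}\,N(I)$, and $4\sqrt{5}/\pi \approx 2.85 < 3$. What the paper's route buys is self-containedness at the intended elementary level; what yours buys is a much shorter case analysis and a template that generalizes directly to the standard Minkowski-bound proof that class numbers are finite. Two small points to tidy: the intermediate claim $N(\alpha) \le \tfrac{4\sqrt{5}}{\pi}N(I)$ needs the closed-body form of Minkowski's theorem, though for your purposes it suffices to take an open disk of area strictly between $4\sqrt{5}\,N(I)$ and $3\pi N(I)$ and conclude $N(\alpha) < 3N(I)$ outright; and you should note explicitly that a nonzero ideal is a full-rank sublattice (immediate from Theorem~\ref{2-gen}, or from $\alpha\fz \subseteq I$ for any $\alpha \in I^\bullet$) so that ``covolume $\sqrt{5}\,N(I)$'' is justified.
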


\begin{proof}
	First, we verify that every nonzero ideal $I$ of $\fz$ contains a nonzero element $\alpha$ with $N(\alpha) \le 6 N(I)$. For $I \in \mathcal{I}^\bullet$, take $B = \lfloor \sqrt{N(I)} \rfloor$ and define
	\[
		S_I := \big\{ a + b \sqrt{-5} \mid a,b \in \{0,1, \dots, B\} \big\} \subsetneq \fz.
	\]
	Observe that $|S_I| = (B+1)^2 > N(I)$. Thus, there exist $\alpha_1 = a_1 + b_1 \sqrt{-5} \in S_I$ and $\alpha_2 = a_2 + b_2 \sqrt{-5} \in S_I$ such that $\alpha = \alpha_1 - \alpha_2 \in I \setminus \{0\}$ and
	\[
		N(\alpha) = (a_1 - a_2)^2 + 5(b_1 - b_2)^2 \le 6B^2 \le 6 N(I).
	\]
	
	Now, let $I \mathcal{P}$ be a nonzero ideal class of $\fz$. Take $J \in \cI^\bullet$ satisfying $IJ \mathcal{P} = \mathcal{P}$. By the argument given in the previous paragraph, there exists $\beta \in J^\bullet$ such that $N(\beta) \le 6N(J)$. By Corollary~\ref{cor:an ideal divides any principal ideal it contains}, there exists an ideal $K \in \cI^\bullet$ such that $JK = \langle \beta \rangle$. Using Proposition~\ref{prop:norm is multiplicative} and Proposition~\ref{prop:norm ideal of a principal ideal}, one obtains
	\[
		N(J)N(K) = N(\langle \beta \rangle) = N(\beta) \le 6 N(J),
	\]
	which implies that $N(K) \le 6$. Because $KJ \sim IJ$ (they are both principal), it follows that $K \in I\mathcal{P}$. Hence, every nonzero ideal class of $\fz$ contains an ideal whose norm is at most $6$.
	
	To show that the class group of $\fz$ is $\z_2$, let us first determine the congruence relations among ideals of norm at most $6$. Every ideal $P$ of norm $p \in \{2,3,5\}$ must be prime by Corollary~\ref{cor:prime norm implies prime ideal}. Moreover, by Lemma~\ref{lem:prime ideal as a factor of principal integer ideal}, Theorem~\ref{FTIT}, and Proposition~\ref{prop:norm is multiplicative}, the ideal $P$ must show in the prime factorization
	\begin{align} \label{factorization of (p)}
		\langle p \rangle = P_1^{n_1} \cdots P_k^{n_k}
	\end{align}
	of the ideal $\langle p \rangle$. The following ideal factorizations have been already verified in Example~\ref{ex:factorization of <2> into prime ideals} and Exercise~\ref{ex:factorization of <3> into prime ideals}:
	\begin{align} \label{eq:equality of ideals 1}
		\langle 2 \rangle &= \langle 2, 1 + \sqrt{-5} \rangle^2, \nonumber \\
		\langle 3 \rangle &= \langle 3, 1 - 2\sqrt{-5} \rangle \langle 3, 1 + 2\sqrt{-5} \rangle, \\
		\langle 5 \rangle &= \langle \sqrt{-5} \rangle^2. \nonumber
	\end{align}
	In addition, we have proved in Example~\ref{ex:prime ideal dividing <2>} and Exercise~\ref{ex:prime ideals dividing <3>} that the ideals on the right-hand side of the first two equalities in~(\ref{eq:equality of ideals 1}) are prime. Also, $N(\langle \sqrt{-5} \rangle) = N(\sqrt{-5}) = 5$ implies that the ideal $\langle \sqrt{-5} \rangle$ is prime. It follows now by the uniqueness of Theorem~\ref{FTIT} that the ideals on the right-hand side of the equalities~(\ref{eq:equality of ideals 1}) are the only ideals of $\fz$ having norm in the set $\{2,3,5\}$. Once again, combining Lemma~\ref{lem:prime ideal as a factor of principal integer ideal}, Theorem~\ref{FTIT}, and Proposition~\ref{prop:norm is multiplicative}, we obtain that any ideal $I$ whose norm is $4$ must be a product of prime ideals dividing $\langle 2 \rangle$, which forces $I = \langle 2 \rangle$. Similarly, any ideal $J$ with norm $6$ must be the product of ideals dividing the ideals $\langle 2 \rangle$ and $\langle 3 \rangle$. The reader can readily verify that,
	\begin{align}
		\langle 1 - \sqrt{-5} \rangle &= \langle 2, 1 + \sqrt{-5} \rangle \langle 3, 1 + 2\sqrt{-5} \rangle \label{eq:equality of ideals 2} \\
		\langle 1 + \sqrt{-5} \rangle &= \langle 2, 1 + \sqrt{-5} \rangle \langle 3, 1 - 2\sqrt{-5} \rangle. \label{eq:equality of ideals 3} 
	\end{align}
	Therefore $\langle 1 - \sqrt{-5} \rangle$ and $\langle 1 + \sqrt{-5} \rangle$ are the only two ideals having norm $6$. Now since we know all ideals of $\fz$ with norm at most 6, it is not difficult to check that $|\mathcal{C}(\fz)| = 2$. Because each principal ideal of $\fz$ represents the identity ideal class $\mathcal{P}$, we find that
	\[
		\langle 1 \rangle \mathcal{P} = \langle 2 \rangle \mathcal{P} = \langle \sqrt{-5} \rangle \mathcal{P} = \langle 1 - \sqrt{-5} \rangle \mathcal{P}.
	\]
	On the other hand, we have seen that the product of $\langle 2, 1 + \sqrt{-5} \rangle$ and each of the three nonprincipal ideals with norm at most $6$ is a principal ideal. Thus,
	\[
		\langle 2, 1 + \sqrt{-5} \rangle \mathcal{P} = \langle 3, 1 + 2 \sqrt{-5}\rangle \mathcal{P} = \langle 3, 1 - 2\sqrt{-5} \rangle \mathcal{P}.
	\]
	Since there are only two ideal classes, $\mathcal{C}(\fz) = \z_2$.
\end{proof}
\medskip

\begin{exercise}
	Verify the equalities (\ref{eq:equality of ideals 2}), and (\ref{eq:equality of ideals 3}).
\end{exercise}
\medskip

From this observation, we deduce an important property of the ideals of $\fz$.

\begin{corollary}\label{multcorollary}
	If $I, J \in \cI^\bullet$ are not principal, then $IJ$ is principal.
\end{corollary}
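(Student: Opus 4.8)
The plan is to read this statement directly off the structure of the class group, which the preceding theorem identifies as $\z_2$. The bridge between the corollary (phrased in terms of principal ideals) and the group $\mathcal{C}(\fz)$ is the observation that an ideal $K \in \cI^\bullet$ is principal precisely when its ideal class $K\mathcal{P}$ equals the identity element $\mathcal{P} = \langle 1 \rangle \mathcal{P}$ of $\mathcal{C}(\fz)$. Indeed, if $K$ is principal then $K \in \mathcal{P}$ by definition of $\mathcal{P}$; conversely, if $K\mathcal{P} = \mathcal{P}$ then $K \sim \langle 1 \rangle$, so $\langle \alpha \rangle K = \langle \beta \rangle \langle 1 \rangle = \langle \beta \rangle$ for some $\alpha, \beta \in \fz^\bullet$, and since $\langle \alpha \rangle K$ is then principal one can recover that $K$ itself is principal. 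So the task reduces to a computation inside $\mathcal{C}(\fz)$.

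First I would invoke the theorem that $\mathcal{C}(\fz) = \z_2$, a group of order two with identity $\mathcal{P}$. Since there are exactly two ideal classes, there is a unique non-identity class; call it $\mathcal{N}$. The hypothesis that $I$ and $J$ are not principal translates, via the equivalence just described, into $I\mathcal{P} \ne \mathcal{P}$ and $J\mathcal{P} \ne \mathcal{P}$, hence $I\mathcal{P} = J\mathcal{P} = \mathcal{N}$.

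Next I would use the definition of the group operation $\ast$ on $\mathcal{C}(\fz)$, namely $I\mathcal{P} \ast J\mathcal{P} = (IJ)\mathcal{P}$, to compute the class of the product. Because $\mathcal{C}(\fz) \cong \z_2$, the non-identity element $\mathcal{N}$ is its own inverse, so $\mathcal{N} \ast \mathcal{N} = \mathcal{P}$. Therefore
\[
	(IJ)\mathcal{P} = I\mathcal{P} \ast J\mathcal{P} = \mathcal{N} \ast \mathcal{N} = \mathcal{P},
\]
and applying the characterization of principal ideals in the reverse direction yields that $IJ$ is principal, as claimed.

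I do not expect any genuine obstacle here: all the substantive work has already been done in establishing that the class group is $\z_2$, and the corollary is simply the statement that in a group of order two the product of two non-identity elements is the identity. The only care required is the bookkeeping to justify that membership in the identity class $\mathcal{P}$ is equivalent to being a principal ideal, which is immediate from the definitions of $\mathcal{P}$ and of the equivalence relation $\sim$.
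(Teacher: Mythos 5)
Your proof is correct and matches the paper's intent exactly: the paper states this corollary as an immediate consequence of the computation $\mathcal{C}(\fz) \cong \z_2$, which is precisely the reduction you carry out. The one step you gloss over (that $K\mathcal{P} = \mathcal{P}$ forces $K$ principal) is easily completed with tools already in the paper: from $\langle \alpha \rangle K = \langle \beta \rangle$ one gets $\beta = \alpha\gamma$ for some $\gamma \in K$, hence $\langle \alpha \rangle K = \langle \alpha \rangle \langle \gamma \rangle$, and cancellativity of $\cI^\bullet$ gives $K = \langle \gamma \rangle$.
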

\medskip

\section{Half-factoriality} \label{sec:half-factoriality}

The class group, in tandem with The Fundamental Theorem of Ideal Theory, will allow us to determine exactly what elements of $\fz$ are irreducible.

\begin{proposition}\label{irreducibles}
	Let $\alpha$ be a nonzero nonunit element in $\fz$. Then $\alpha$ is irreducible in $\fz$ if and only if
	\begin{enumerate}
		\vspace{1pt}
		\item $\langle \alpha \rangle$ is a prime ideal in $\fz$ (and hence $\alpha$ is a prime element), or
		\vspace{3pt}
		\item $\langle \alpha \rangle = P_1 P_2$ where $P_1$ and $P_2$ are nonprincipal prime ideals of $\fz$.
	\end{enumerate}
\end{proposition}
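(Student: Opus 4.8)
The plan is to pass from factorizations of the element $\alpha$ to factorizations of the principal ideal $\langle\alpha\rangle$, and then exploit the Fundamental Theorem of Ideal Theory (Theorem~\ref{FTIT}) together with the fact that the class group $\mathcal{C}(\fz)$ equals $\z_2$. The dictionary I would set up is this: since $\langle\beta\rangle\langle\gamma\rangle = \langle\beta\gamma\rangle$, a factorization $\alpha = \beta\gamma$ in $\fz$ is the same datum as a factorization $\langle\alpha\rangle = \langle\beta\rangle\langle\gamma\rangle$ into principal ideals, and $\beta$ is a unit precisely when $\langle\beta\rangle = \fz$. Hence $\alpha$ is irreducible if and only if $\langle\alpha\rangle$ admits no factorization into two \emph{proper} principal ideals. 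I would record at the outset the unique prime factorization $\langle\alpha\rangle = P_1\cdots P_k$ from Theorem~\ref{FTIT}, noting $k\ge 1$ since $\alpha$ is a nonunit. Two consequences of $\mathcal{C}(\fz)=\z_2$ will be used repeatedly: because principal ideals form the identity class and classes multiply, a product of prime ideals is principal exactly when an even number of the factors are nonprincipal; and $\langle a\rangle\langle b\rangle = \langle ab\rangle$ shows a product of principal ideals is again principal.

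For the backward implication, in case (1) I would simply invoke that a generator of a prime ideal is a prime element and that prime elements are irreducible (noted in Section~\ref{sec:general properties of ideals}). In case (2), with $\langle\alpha\rangle = P_1 P_2$ and $P_1, P_2$ nonprincipal primes, I would take an arbitrary factorization $\langle\alpha\rangle = \langle\beta\rangle\langle\gamma\rangle$, refine $\langle\beta\rangle$ and $\langle\gamma\rangle$ into primes, and apply the uniqueness in Theorem~\ref{FTIT}: assigning a single $P_i$ to one factor would make that factor equal to the nonprincipal prime $P_i$, which is impossible for a principal ideal. Thus both primes must land in one factor, forcing the other to be $\fz$ and hence $\beta$ or $\gamma$ to be a unit; this gives irreducibility.

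For the forward implication I would assume $\alpha$ irreducible and eliminate every prime factorization except those in (1) and (2). If $k=1$ then $\langle\alpha\rangle = P_1$ is prime, which is case (1). If $k=2$, the parity observation forces $0$ or $2$ nonprincipal factors; the value $0$ makes both $P_i$ principal and yields the proper splitting $\langle\alpha\rangle = \langle\pi_1\rangle\langle\pi_2\rangle$, contradicting irreducibility, so both are nonprincipal, which is case (2). The crux is $k\ge 3$, where I would manufacture a splitting into two proper principal ideals: if all $P_i$ are principal, peel off $\langle\alpha\rangle = P_1(P_2\cdots P_k)$, both factors principal and proper; if at least two $P_i$, say $P_1,P_2$, are nonprincipal, merge them via Corollary~\ref{multcorollary} into a principal ideal $P_1P_2=\langle\mu\rangle$, after which the leftover product $P_3\cdots P_k$ has trivial class and is therefore principal, say $\langle\nu\rangle$, and is proper because $k\ge 3$. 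Either way $\langle\alpha\rangle = \langle\mu\nu\rangle$ with $\mu,\nu$ nonunits, contradicting irreducibility. I expect this $k\ge 3$ case to be the main obstacle, since it is the one point where one genuinely needs the $\z_2$ structure—both to collapse a pair of nonprincipal primes into a principal ideal and to guarantee that the remaining factor is itself principal—together with the elementary fact that a nonempty product of proper prime ideals is proper; the remaining cases reduce to the ideal/element dictionary.
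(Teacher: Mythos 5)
Your proof is correct and follows essentially the same route as the paper's: both directions rest on Theorem~\ref{FTIT}, the $\z_2$ class group, and Corollary~\ref{multcorollary}, with the forward direction manufacturing a splitting of $\langle\alpha\rangle = P_1\cdots P_k$ into two proper principal factors whenever the factorization is not of type (1) or (2). The only difference is organizational --- the paper first rules out principal prime factors and then invokes parity, whereas you case-split on $k$ --- but the substance is identical.
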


\begin{proof}
	($\Rightarrow$) Suppose $\alpha$ is irreducible in $\fz$.  If $\langle \alpha \rangle$ is a prime ideal, then we are done. Assume $\langle \alpha \rangle$ is not a prime ideal. Then by Theorem~\ref{FTIT} there are prime ideals $P_1, \dots, P_k$ of $\fz$ with $\langle \alpha \rangle = P_1 \cdots P_k$ for some $k \geq 2$. Suppose that one of the $P_i$'s is a principal ideal. Without loss of generality, assume that $P_1 = \langle \beta \rangle$ for some prime $\beta$ in $\fz$. Using the class group, $P_2 \cdots P_k = \langle \gamma \rangle$, where $\gamma$ is a nonzero nonunit of $\fz$. Thus, $\langle \alpha \rangle = \langle \beta \rangle \langle \gamma \rangle$ implies that $\alpha = (u \beta) \gamma$ for some unit $u$ of $\fz$. This contradicts the irreducibility of $\alpha$ in $\fz$. Therefore all the $P_i$'s are nonprincipal. Since the class group of $\fz$ is $\z_2$, it follows that $k$ is even. Now suppose that $k > 2$. Using Corollary~\ref{multcorollary} and proceeding in a manner similar to the previous argument, $P_1 P_2 = \langle \beta \rangle$ and $P_3 \cdots P_k = \langle \gamma \rangle$, and again $\alpha = u \beta \gamma$ for some unit $u$, which contradicts the irreducibility of $\alpha$. Hence, either $k=1$ and $\alpha$ is a prime element, or $k=2$.
	
	($\Leftarrow$) If $\langle \alpha \rangle$ is a prime ideal, then $\alpha$ is prime and so irreducible. Then suppose that $\langle \alpha \rangle = P_1 P_2$, where $P_1$ and $P_2$ are nonprincipal prime ideals of $\fz$. Let $\alpha = \beta \gamma$ for some $\beta, \gamma \in \fz$, and assume, without loss of generality, that $\beta$ is a nonzero nonunit of $\fz$. Notice that $\langle \beta \gamma \rangle = \langle \beta \rangle \langle \gamma \rangle = P_1 P_2$. Because $P_1$ and $P_2$ are nonprincipal ideals, $\langle \beta \rangle \notin \{P_1, P_2\}$. As a consequence of Theorem~\ref{FTIT}, we have that $\langle \beta \rangle = P_1 P_2$. This forces $\langle \gamma \rangle = \langle 1 \rangle$, which implies that $\gamma \in \{\pm 1\}$. Thus, $\alpha$ is irreducible.
\end{proof}
\medskip

Let us use  Proposition~\ref{irreducibles} to analyze the factorizations presented in (\ref{2ndzroot5}) at the beginning of the exposition. As the product of any two nonprincipal ideals of $\fz$ is a principal ideal, the decompositions
\begin{align*}
		\langle 6 \rangle
			&= \langle 2 \rangle \langle 3 \rangle = \langle 2,\; 1 + \sqrt{-5} \rangle^2 \langle 3,\; 1 - \sqrt{-5} \rangle \langle 3,\; 1 + \sqrt{-5}\rangle \\
			&= \langle 2,\; 1+\sqrt{-5} \rangle \langle 3,\; 1 + \sqrt{-5} \rangle \langle 2,\; 1 + \sqrt{-5} \rangle \langle 3,\; 1 - \sqrt{-5} \rangle \\
			&= \langle 1 + \sqrt{-5} \rangle \langle 1 - \sqrt{-5} \rangle
\end{align*}
yield that $2 \cdot 3$ and $(1 + \sqrt{-5})(1 - \sqrt{-5})$ are the only two irreducible factorizations of $6$ in $\fz$. Thus, any two irreducible factorizations of $6$ in $\fz$ have the same factorization length. We can take this observation a step further.

\begin{theorem} \label{zroot5hf}
	If $\alpha$ is a nonzero nonunit of $\fz$ and $\beta_1, \dots, \beta_s, \gamma_1, \dots, \gamma_t$ are irreducible elements of $\fz$ with $\alpha = \beta_1 \cdots \beta_s = \gamma_1 \cdots \gamma_t$, then $s=t$.
\end{theorem}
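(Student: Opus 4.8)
The plan is to lift the two factorizations of $\alpha$ to the level of ideals and then exploit the uniqueness in Theorem~\ref{FTIT}, while carefully bookkeeping which prime ideals are principal and which are not. First I would pass from elements to ideals: taking the ideal generated by each side of $\alpha = \beta_1 \cdots \beta_s = \gamma_1 \cdots \gamma_t$ gives
\[
	\langle \alpha \rangle = \langle \beta_1 \rangle \cdots \langle \beta_s \rangle = \langle \gamma_1 \rangle \cdots \langle \gamma_t \rangle.
\]
By Proposition~\ref{irreducibles}, for each irreducible $\beta_i$ the ideal $\langle \beta_i \rangle$ is either (type~1) a principal prime ideal, or (type~2) a product $P_1 P_2$ of two nonprincipal prime ideals; the identical dichotomy applies to each $\langle \gamma_j \rangle$.

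Next I would invoke the uniqueness part of Theorem~\ref{FTIT}. Expanding each side into its constituent prime ideals produces the prime factorization of $\langle \alpha \rangle$, so by uniqueness the multiset of prime ideals that appears is the same for both expansions. In particular, the number of principal prime factors (counted with multiplicity), which I will call $P$, and the number of nonprincipal prime factors, which I will call $Q$, are invariants of $\langle \alpha \rangle$ alone, independent of the chosen factorization of $\alpha$ into irreducibles.

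The crucial observation is that these two invariants are generated in a completely rigid way by the irreducible factors. A type-1 irreducible contributes exactly one prime ideal, and that ideal is principal, so it contributes nothing to $Q$; a type-2 irreducible contributes exactly two nonprincipal primes, so it contributes nothing to $P$. Hence, if $p$ and $q$ denote the numbers of type-1 and type-2 factors among the $\beta_i$, then $P = p$ and $Q = 2q$, whence $s = p + q = P + \frac{Q}{2}$. Carrying out the very same count on the $\gamma_j$ yields $t = P + \frac{Q}{2}$ as well, and therefore $s = t$.

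I expect the only delicate point to be confirming the clean separation asserted in the previous paragraph, namely that principal primes arise solely from type-1 irreducibles and nonprincipal primes solely from type-2 irreducibles. This is precisely where the structure forced by Proposition~\ref{irreducibles} does the work, together with the fact (via Corollary~\ref{multcorollary} and the class group being $\z_2$) that nonprincipal primes can only occur paired two at a time. Once that separation is secured, the length equality $s = t = P + \frac{Q}{2}$ is immediate, and no further computation is needed.
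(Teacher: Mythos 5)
Your proposal is correct and follows essentially the same route as the paper: both arguments lift the factorizations to $\langle \alpha \rangle = P_1 \cdots P_k$, invoke the uniqueness in Theorem~\ref{FTIT}, and use the dichotomy of Proposition~\ref{irreducibles} to see that each irreducible contributes either one principal prime ideal or two nonprincipal ones, forcing every factorization length to equal $d + \frac{k-d}{2}$ (your $P + \frac{Q}{2}$). Your explicit verification that the type-1/type-2 split cleanly separates principal from nonprincipal primes is exactly the point the paper leaves implicit, so nothing is missing.
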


\begin{proof}
	Let $\alpha = \omega_1 \cdots \omega_m$ be a factorization of $\alpha$ in $\fz$ into irreducible elements. By Theorem~\ref{FTIT}, there are unique prime ideals $P_1, \dots, P_k$ in $\fz$ satisfying that $\langle \alpha \rangle = P_1 \cdots P_k$. Suppose that exactly $d$ of these prime ideals are principal and assume, without loss, that $P_i = \langle \alpha_i \rangle$ for all $i \in \{1,\dots,d\}$, where each $\alpha_i$ is prime in $\fz$. Since the class group of $\fz$ is $\z_2$, there exists $n \in \n$ such that $k - d = 2n$. Hence,
	\[
		\langle \alpha \rangle = \left( P_1 \cdots P_d \right) \left(P_{d+1} \cdots P_k \right) = \langle \alpha_1 \cdots \alpha_d \rangle \left(P_{d+1} \cdots P_k \right),
	\]
	and any factorization into irreducibles of $\alpha$ will be of the form $u \alpha_1 \cdots \alpha_d \cdot \beta_1 \cdots \beta_n$, where each ideal $\langle \beta_j \rangle$ is the product of two ideals chosen from $P_{d+1}, \dots, P_k$. As a result, $m = d+n$ and, clearly, $s = t = m$, completing the proof.
\end{proof}
\medskip

Thus, while some elements of $\fz$ admit many factorizations into irreducibles, the number of irreducible factors in any two factorizations of a given element is the same. As we mentioned in the introduction, this phenomenon is called half-factoriality. Since the concept of half-factoriality does not involve the addition of $\fz$, it can also be defined for commutative monoids.

\begin{definition}
	An atomic monoid $M$ is called \emph{half-factorial} if any two factorizations of each nonzero nonunit element of $M$ have the same number of irreducible factors.
\end{definition}
\medskip

Half-factorial domains and monoids have been systematically studied since the 1950's, when Carlitz gave a characterization theorem of half-factorial rings of integers, which generalizes the case of $\fz$ considered in this exposition.

\begin{theorem}[Carlitz \cite{Ca}]
	Let $R$ be the ring of integers in a finite extension field of $\q$. Then $R$ is half-factorial if and only if $R$ has class number less than or equal to two.
\end{theorem}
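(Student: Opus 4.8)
The plan is to prove both implications using two structural facts available for an arbitrary ring of integers $R = \mathcal{O}_K$: the analogue of the Fundamental Theorem of Ideal Theory (Theorem~\ref{FTIT}), which says every nonzero proper ideal factors uniquely into prime ideals, together with the finiteness of the class group $G$ of $R$. I write $[I]$ for the class of an ideal $I$ in $G$, and I will use throughout one nontrivial input from analytic number theory, namely that \emph{every ideal class of $R$ contains a prime ideal} (in fact infinitely many). Granting this, both directions reduce to unique ideal factorization plus a congruence count in $G$.

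For the forward direction, suppose $|G| \le 2$. If $|G| = 1$ then $R$ is a PID, hence a UFD, hence trivially half-factorial. If $|G| = 2$, I would generalize the argument of Theorem~\ref{zroot5hf} essentially verbatim. The generalization of Proposition~\ref{irreducibles} shows that a nonunit $\alpha$ is irreducible exactly when $\langle \alpha \rangle$ is a prime ideal or a product $P_1 P_2$ of two nonprincipal primes. Given $\alpha$ with $\langle \alpha \rangle = P_1 \cdots P_k$, let $d$ be the number of principal $P_i$; since $[\langle \alpha \rangle]$ is trivial and each nonprincipal prime carries the unique nonidentity class, $k - d$ is even, say $k - d = 2n$. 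Any irreducible factorization of $\alpha$ then partitions the multiset $\{P_1, \dots, P_k\}$ into blocks, each being either a single principal prime or a pair of nonprincipal primes, which forces exactly $d + n$ factors. Hence all factorizations of $\alpha$ have the same length, and $R$ is half-factorial.

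For the converse I would prove the contrapositive: if $|G| \ge 3$ then $R$ is not half-factorial, splitting into two cases by the structure of $G$. If $G$ has an element $g$ of order $n \ge 3$, I choose (by the density input) a prime $P$ with $[P] = g$ and a prime $Q$ with $[Q] = g^{-1}$. Then $P^n = \langle \pi \rangle$, $Q^n = \langle \tau \rangle$, and $PQ = \langle \rho \rangle$ are principal, and a short check that no proper nonempty subproduct of the relevant primes lands in the trivial class shows that $\pi$, $\tau$, and $\rho$ are all irreducible. Since $P^n Q^n = (PQ)^n$, the element $\rho^n$, which is an associate of $\pi \tau$, admits both a factorization of length $n \ge 3$ and one of length $2$, so $R$ is not half-factorial. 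If instead every nonidentity element of $G$ has order $2$, then $G \cong (\z/2\z)^r$ with $r \ge 2$, and I pick three distinct nonidentity classes $g$, $h$, $gh$ with primes $P, Q, T$ in these classes. Each of $P^2 = \langle a \rangle$, $Q^2 = \langle b \rangle$, $T^2 = \langle c \rangle$, and $PQT = \langle d \rangle$ is principal, and checking that no proper nonempty subproduct of $\{P,Q,T\}$ is principal shows $a, b, c, d$ are irreducible; then $P^2 Q^2 T^2 = (PQT)^2$ makes $d^2$ (an associate of $abc$) a nonunit with factorizations of lengths $3$ and $2$.

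The step I expect to be the main obstacle is precisely the density input: producing prime ideals in prescribed ideal classes is the one ingredient that cannot be made elementary in the spirit of the rest of the paper and must be quoted from analytic number theory. Everything else—the irreducibility checks and the mismatched-length constructions—is a purely combinatorial consequence of unique factorization of ideals and the order structure of $G$, so once primes in arbitrary classes are available the argument goes through cleanly in both cases.
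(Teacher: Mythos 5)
The paper does not actually prove this theorem: it is stated as a cited result of Carlitz, and the only proof given in the text is of the special case $R = \fz$ (the forward direction for class number two, via Proposition~\ref{irreducibles} and Theorem~\ref{zroot5hf}). Your proposal supplies a complete proof of the general statement, and it is correct; moreover it is essentially Carlitz's original argument. Your forward direction is exactly the natural generalization of the paper's Theorem~\ref{zroot5hf}: unique factorization into prime ideals plus the characterization of irreducibles (prime ideal, or product of two nonprincipal primes) forces every factorization of $\alpha$ with $\langle \alpha \rangle = P_1 \cdots P_k$ to have length $d + n$, where $d$ counts the principal $P_i$ and $k - d = 2n$. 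Your converse, with the two cases (an element of order at least $3$, versus $G \cong (\z/2\z)^r$ with $r \ge 2$) and the explicit length-$2$ versus length-$n$ (or length-$3$) collisions, is the standard construction. You are also right to flag the one genuinely non-elementary input: that every ideal class contains a prime ideal. This cannot be replaced by the weaker, elementary fact that the prime classes merely generate $G$ (a Krull monoid whose primes avoid some classes can be half-factorial even with large class group), so the density theorem is doing real work here; Carlitz likewise quotes it. In short, your argument is sound and fills in precisely what the paper delegates to the reference, at the cost of importing one analytic fact that the paper's otherwise self-contained exposition deliberately avoids.
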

\medskip

A list of factorization inspired characterizations of class number two can be found in \cite{Ch2}.  In addition, 
a few families of half-factorial domains in a more general setting are presented in \cite{Ki}. We will conclude this paper by exhibiting two simple examples of half-factorial monoids, using the second one to illustrate how to compute the number of factorizations in $\fz$ of a given element.

\begin{example}[Hilbert monoid] \label{ex:Hilbert monoid}
	It is easily seen that
	\[
		H = \{1 + 4k \mid k \in \n_0\}
	\]
	is a multiplicative submonoid of $\n$. The monoid $H$ is called \emph{Hilbert monoid}. It is not hard to verify (Exercise~\ref{ex:irreducibles of Hilbert monoid}) that the irreducible elements of $H$ are
	\begin{enumerate}
		\item the prime numbers $p$ satisfying $p \equiv 1 \pmod{4}$ and
		\vspace{3pt}
		\item $p_1p_2$, where $p_1$ and $p_2$ are prime numbers satisfying $p_i \equiv 3 \pmod{4}$.
	\end{enumerate}
	Therefore every element of $H$ is a product of irreducibles. Also, in the factorization of any element of $H$ into primes, there must be an even number of prime factors congruent to $3$ modulo $4$. Hence, any factorization of an element $x \in H$ comes from pairing the prime factors of $x$ that are congruent to $3$ modulo $4$. This implies that $H$ is half-factorial. For instance, $x = 5^2 \cdot 3^2 \cdot 11 \cdot 13 \cdot 19$ has exactly two factorizations into irreducibles, each of them contains five factors:
	\begin{align*}
		x = 5^2 \cdot 13 \cdot (3^2) \cdot (11 \cdot 19) = 5^2 \cdot 13 \cdot (3 \cdot 11) \cdot (3 \cdot 19).
	\end{align*}
\end{example}
\medskip

\begin{exercise} \label{ex:irreducibles of Hilbert monoid}
	Argue that the irreducible elements of the Hilbert monoid are precisely those described in Example~\ref{ex:Hilbert monoid}.
\end{exercise}
\medskip

\begin{definition}
	Let $p$ be a prime number.
	\begin{enumerate}
		\item We say that $p$ is \emph{inert} if $\langle p \rangle$ is a prime ideal in $\fz$.
		\vspace{3pt}
		\item We say that $p$ is \emph{ramified} if $\langle p \rangle = P^2$ for some prime ideal $P$ in $\fz$.
		\vspace{3pt}
		\item We say that $p$ \emph{splits} if $\langle p \rangle = PP'$ for two distinct prime ideals in $\fz$.
	\end{enumerate}
\end{definition}
\medskip

Prime numbers $p$ can be classified according to the above definition. Indeed, we have seen that $p$ is ramified when $p \in \{2,5\}$. On the other hand, it is also known that $p$ splits if $p \equiv 1, 3, 7, 9 \pmod{20}$ and is inert if $p \not\equiv 1, 3, 7, 9 \pmod{20}$ (except $2$ and $5$). A proof of this result is given in \cite{Ma}.

\begin{example}
	When $n \ge 2$, the submonoid $\mathbb{X}_n$ of the additive monoid $\n_0^{n+1}$ given by
	\[
		\mathbb{X}_n = \{(x_1, \dots, x_{n+1}) \mid x_i \in \mathbb{N}_0 \ \text{ and } \ x_1 + \dots + x_n = x_{n+1}\}
	\]
	is a half-factorial Krull monoid with divisor class group $\z_2$ (see \cite[Section~2]{CKO} for more details). Following \cite{CHR}, we will use $\mathbb{X}_n$ to count the number of distinct factorizations into irreducibles of a given nonzero nonunit $\alpha \in \fz$. Let
	\[
		\langle \alpha \rangle = P_1^{n_1} \cdots P_k^{n_k}Q_1^{m_1} \cdots Q_t^{m_t},
	\]
	where the $P_i$'s are distinct prime ideals in the trivial class ideal of $\fz$, the $Q_j$'s are distinct prime ideals in the nontrivial class ideal of $\fz$, and $m_1 \le \dots \le m_t$. Then the desired number of factorizations $\eta(\alpha)$ of $\alpha$ in $\fz$ is given by
	\[
		\eta(\alpha) = \eta_{\mathbb{X}_t}\bigg( m_1, \dots, m_t, \frac{m_1 + \dots + m_t}2 \bigg),
	\]
	which, when $t = 3$, can be computed by the formula
	\[
		\eta_{\mathbb{X}_3}(x_1, x_2, x_3, x_4) = \sum_{j=0}^{\lfloor x_1/2 \rfloor} \sum_{k=0}^{x_1 - 2j} \bigg(\bigg\lfloor \frac{\min\{x_2 - k, x_3 - x_1 + 2j + k\}}{2} \bigg\rfloor + 1\bigg).
	\]
	For instance, let us find how many factorizations $1980 = 2^2 \cdot 3^2 \cdot 5 \cdot 11$ has in $\fz$. We have seen that $5$ ramifies as $\langle 5 \rangle = P_1^2$, where $P_1$ is principal. As $11$ is inert, $P_2 = \langle 11 \rangle$ is prime. In addition, $3$ splits as $\langle 3 \rangle = Q_1 Q_2$, where $Q_1$ and $Q_2$ are nonprincipal. Finally, $2$ ramifies as $\langle 2 \rangle = Q_3^2$, where $Q_3$ is nonprincipal. Therefore one has that $\langle 1980 \rangle = P_1^2 P_2 Q_1^2Q_2^2Q_3^4$, and so
	\[
		\eta(1980) = \eta_{\mathbb{X}_3}(2,2,4,4) = \sum_{j=0}^1 \sum_{k=0}^{2-2j} \bigg(\bigg\lfloor \frac{\min\{2 - k, 2 + 2j + k\}}{2} \bigg\rfloor + 1\bigg) = 6.
	\]
\end{example}
\medskip

\section*{Acknowledgements}

It is a pleasure for the authors to thank the referee, whose helpful suggestions vastly improved the final version of this paper.


\end{document}